\documentclass[a4paper,11pt]{amsart}
 \usepackage[utf8x]{inputenc}
 \usepackage{amsthm,amsfonts,amsmath,amssymb,enumerate}
 \usepackage{verbatim} % comentarios
 \usepackage{rotating} % rotar tablas

 \usepackage{pdfsync}
 \usepackage{cancel}
 
 \usepackage{multirow}
 \usepackage{color}
 
 \usepackage{tikz}
 \usetikzlibrary{calc}
 \usetikzlibrary{decorations.pathreplacing}
 \usepackage[all]{xy}

 \newtheorem{theorem}{Theorem}[section]
 \newtheorem{lemma}[theorem]{Lemma}
 \newtheorem{corollary}[theorem]{Corollary}
 
 \newtheorem{proposition}[theorem]{Proposition}

 \newtheorem*{thm}{Theorem}
 
 \theoremstyle{definition}
 \newtheorem{definition}[theorem]{Definition}
 \newtheorem{example}[theorem]{Example}

 \theoremstyle{remark}
 \newtheorem{remark}[theorem]{Remark}

 \numberwithin{equation}{section}
 
 \newcommand{\g}{\mathfrak{g}}

 \newcommand{\N}{\mathbb{N}}
 \newcommand{\K}{\mathbb{K}}

 \newcommand{\F}{\mathcal{F}}
 
 \newcommand{\mult}{\mathtt{m}}
 
\begin{document}
 	
\title[$k$-step nilpotent symplectic Lie algebras]{$k$-step nilpotent symplectic Lie algebras associated with graphs}

\author{Josefina Barrionuevo, Paulo Tirao and Sonia Vera}

\subjclass{53D05, 53D10 (primary), 17B56 (secondary)}

\date{April 2026}

\begin{abstract}
We construct families of $k$-step nilpotent symplectic Lie algebras associated with graphs, 
extending the construction given in \cite{PT} for the 2-step case. 

We also show that, under mild conditions on the nilpotency type, there exist symplectic Lie algebras of that type.
\end{abstract}

\maketitle

%==============================================================
\setcounter{section}{-1}
\section{Introduction}

Symplectic geometry is a central area of mathematics, with deep connections to physics---most notably as the geometric language of classical mechanics---as well as to differential geometry and topology. Its algebraic counterpart is given by the study of \textbf{symplectic Lie algebras}, which arise as the infinitesimal counterparts of symplectic symmetries on Lie groups. These structures provide a rich framework with important connections to representation theory, Poisson geometry, and mathematical physics.

Following the seminal work of Benson and Gordon \cite{BG}, particular attention has been devoted to nilmanifolds, that is, compact quotients of nilpotent Lie groups. Later, Dani and Mainkar \cite{DM} studied the family of \(2\)-step nilpotent Lie algebras associated with simple graphs, together with the corresponding nilmanifolds, drawing particular attention to this class. In particular, \cite{PT} determines which of these graph-associated Lie algebras admit a symplectic form.

The general problem of deciding which nilpotent Lie algebras admit a symplectic form has been studied by many authors. Most available results concern 2-step nilpotent Lie algebras, low-dimensional cases, and filiform Lie algebras. In contrast, examples of $k$-step nilpotent symplectic Lie algebras remain scarce for $k \ge 3$.

Recently, \cite{ABDGH} characterized nilpotent Lie algebras with a codimension-one abelian ideal admitting a symplectic structure. This family includes $k$-step nilpotent Lie algebras for every $k$.

Given a simple graph $G=(V,E)$, one can associate to it a $k$-step nilpotent Lie algebra $\g(k,G)$. If $G$ is the complete graph on $m$ vertices, then $\g(k,G)$ is the free $k$-step nilpotent Lie algebra on $m$ generators. It was shown in \cite{D} that this Lie algebra admits a symplectic form if and only if $k=2$ and $m=3$.

In this paper, we extend the construction introduced in \cite{PT} for 2-step nilpotent symplectic Lie algebras 
associated with graphs to the $k$-step case. 
The nilpotent Lie algebras we construct are all naturally graded. 

All Lie algebras in this paper are finite-dimensional over a a field $\K$ of characteristic zero.

%==============================================================
\section{Some basic preliminaries}

%---------------------------------------
\subsection{Symplectic Lie algebras}\label{subsec:sym}

Let $\g$ be a Lie algebra of dimension $2n$.
A symplectic form on $\g$ is a closed $2$-form $\omega\in \Lambda^2 \g^*$ such that $\omega^n\ne 0$.
If $\{x_1,\dots,x_{2n}\}$ is a basis of $\g$ and $\{x^*_1,\dots,x^*_{2n}\}$ is its dual basis, then a closed $2$-form
\[
\omega=\sum_{i<j} a_{i,j}\, x^*_i \wedge x^*_j
\]
is symplectic if and only if
\[
\omega^n = c\, x^*_1 \wedge x^*_2 \wedge \cdots \wedge x^*_{2n},
\]
for some $c\ne 0$.

\begin{remark}\label{rmk:sym1}
If the indices satisfy
\[
\{i_k:\, k=1,\dots,n\}\cup \{j_k:\, k=1,\dots,n\}=\{1,\dots,2n\},
\]
then
\begin{equation}\label{eqn:sym-form}
\omega = \sum_{k=1}^n a_k\, x^*_{i_k}\wedge x^*_{j_k}
\end{equation}
satisfies $\omega^n\ne 0$, provided that $a_k\ne 0$ for all $k=1,\dots,n$.
\end{remark}

Recall that $\omega$ is closed if $d(\omega)=0$, where $d:\Lambda^2\g^* \to \Lambda^3\g^*$ is the linear map defined by
\[
d(x^*_i\wedge x^*_j)=d^1(x^*_i)\wedge x^*_j-x^*_i\wedge d^1(x^*_j),
\]
and $d^1:\g^* \to \Lambda^2\g^*$ is the dual map of the bracket
\[
[\;,\;]:\Lambda^2\g \to \g.
\]

A Lie algebra admitting a symplectic form is called a symplectic Lie algebra.

%--------------------------------------------------------
\subsection{$k$-step nilpotent Lie algebras}

The lower central series of a Lie algebra $\g$ is the descending filtration of ideals
\[
\g^1\supseteq \g^2 \supseteq \cdots \supseteq \g^k \supseteq \cdots
\]
defined recursively by
\[
\g^1=\g \qquad\text{and}\qquad \g^{i+1}=[\g,\g^i], \quad \text{for } i\ge 1.
\]
Note that it satisfies
\[
[\g^i,\g^j]\subseteq \g^{i+j},
\]
for all \(i,j\ge 1\).

The associated graded Lie algebra is
\[
\operatorname{gr}(\g)= \bigoplus_{i\ge 1} \g^i/\g^{i+1}.
\]
Its Lie bracket is defined by
\[
[\overline{x},\overline{y}] = \overline{[x,y]},
\]
for $x\in\g^i$ and $y\in\g^j$, where $\overline{x}$ and $\overline{y}$ denote the classes of $x$ and $y$ in
$\g^i/\g^{i+1}$ and $\g^j/\g^{j+1}$, respectively, and $\overline{[x,y]}$ denotes the class of $[x,y]$ in $\g^{i+j}/\g^{i+j+1}$.

The Lie algebra $\g$ is said to be $k$-step nilpotent if $\g^{k+1}=0$ and $\g^{k}\ne 0$. It is said to be \emph{naturally graded} if $\g\simeq \operatorname{gr}(\g)$.

A Lie algebra $\g$ with a graded decomposition
\begin{equation}\label{eqn:graded}
	\g=V_1\oplus\dots\oplus V_k,
\end{equation}
such that $[V_i,V_j]\subseteq V_{i+j}$, is at most $k$-step nilpotent. In general, such a Lie algebra may admit different graded decompositions.

If, in addition,
\begin{equation}\label{eqn:carnot}
	\g^i=V_i\oplus\dots\oplus V_k,
\end{equation}
then $\g$ is $k$-step nilpotent and naturally graded. Such a decomposition is called a \emph{Carnot grading}. It is unique up to isomorphism.

\begin{remark}
	Having a Carnot grading and being naturally graded are equivalent notions.
\end{remark}

%----------------------------------------------------------------------------------

\subsection{(Di)graphs}\label{subsec:dig}

A (simple) graph is a pair \(G=(V,E)\), where \(E\subseteq \mathcal{P}^2(V)\).

A directed graph, or \emph{digraph}, is a graph in which each edge is assigned a direction; that is, each edge has an origin and a terminus. Thus, a digraph is a pair \(G=(V,E)\), where \(E\subseteq V\times V\) and, whenever \((u,v)\in E\), one has \((v,u)\notin E\).

By forgetting the directions, every digraph naturally determines a graph.

A \emph{directed path} in a digraph from a source vertex $u$ to a target vertex $v$ is a finite sequence of distinct vertices $v_1,v_2,\dots,v_k$ such that:
\begin{itemize}
   \item $v_1=u$ and $v_k=v$;
   \item $(v_i,v_{i+1})\in E$ for all $i\in\{1,\dots,k-1\}$.
\end{itemize}
Thus, in a directed path, one must always follow the orientation of the edges.

In this paper, we will repeatedly use the following two families of digraphs:
\begin{enumerate}
\item \textbf{A tree directed from the leaves to the root.}
Given a tree and a choice of a root \(v\), there is a unique way to orient its edges so that, for every vertex \(u\neq v\), there is a directed path from \(u\) to \(v\).

\item \textbf{A unicyclic graph directed centripetally.}
Given a unicyclic graph (that is, a graph with exactly one cycle), one may view it as a cycle with a tree attached to some vertices of the cycle. 
Each such tree is oriented as in (1) by taking its cycle vertex as the root, and the cycle itself may be given either of its two possible cyclic orientations.
\end{enumerate}

%---------------------------------------------------------------------
\subsection{2-step nilpotent Lie algebras associated with graphs and 2-forms}

Let $G=(V,E)$ be a graph, with $V=\{v_1,\dots,v_m\}$.
The associated 2-step nilpotent Lie algebra is the quotient  
\[ \g(2,G)=L_{(2)}(m)/I, \]
where  $L_{(2)}(m)$ is the free 2-step nilpotent Lie algebra and $I$ is the ideal
generated by $\{[v_i, v_j]:\; \{v_i,v_j\}\not\in E \}$.

A basis of $\g(2,G)$ is 
\[\{ v_1,\dots,v_m\}\cup \{[v_i,v_j] : \{v_i,v_j\}\in E, i<j\}, \]
 and its corresponding dual basis of $\g(2,G)^*$ is
\[ \{ v_1^*,\dots,v_m^*\}\cup \{[v_i,v_j]^*: \{v_i,v_j\}\in E, i<j\}. \]

Note that, for these basis vectors, $d^1(v_i^*)=0$ and $d^1([v_i,v_j]^*)=v_i^*\wedge v_j^*$,
and hence the $2$-forms
\begin{equation}\label{eqn:sigma-ij}
	v_i^* \wedge v_j^* \qquad\text{and}\qquad \sigma_{i,j}=v_i^*\wedge[v_i,v_j]^*
    \end{equation}
are closed.

%==============================================================
\section{2-step nilpotent graph symplectic Lie algebras}\label{sec:2-gr}

In this section, for the graphs that yield a 2-step nilpotent symplectic Lie algebra, as characterized in \cite{PT}, we exhibit a symplectic form arising from a particular choice of digraph structure on the graph.

\begin{thm}[\cite{PT}]
The 2-step Lie algebra associated with a graph $G=(V,E)$ is symplectic if and only if
$|V|+|E|$ is even and, in each connected component of $G$, the number of edges does not exceed the number of vertices.
\end{thm}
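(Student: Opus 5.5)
Dimension of $\g(2,G)$ is $|V|+|E|$, so evenness is necessary. The algebra of a disconnected graph is the direct sum of the algebras of its components, and this gives the reduction to components.

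\emph{Sufficiency.} I would build the form directly from a digraph structure, as the section announces. Assume $|V|+|E|$ is even and each component has at most as many edges as vertices. Then each component is either a tree or a unicyclic graph.

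For a unicyclic component, orient it centripetally as in \S\ref{subsec:dig}(2). Each vertex is then the origin of exactly one edge, so each edge $e=(v_i,v_j)$ can be paired with its origin $v_i$. The form
\[
\omega_C=\sum_{(v_i,v_j)\in E_C}\sigma_{i,j}
\]
pairs every basis vector of the component exactly once (with the sign of $[v_i,v_j]^*$ adjusted when $i>j$). Each $\sigma_{i,j}$ is closed by \eqref{eqn:sigma-ij}, and Remark~\ref{rmk:sym1} then gives nondegeneracy.

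For a tree component, orient it from the leaves to a root $r$. Every non-root vertex is the origin of exactly one edge, so the sum of the $\sigma_{i,j}$ pairs everything except the root $v_r$. The roots of the tree components are therefore the only unpaired basis vectors. Their number is even, because the unicyclic components contribute $2|V_C|$ to the parity and each tree contributes $2|V_T|-1$, which is odd. Pair the roots two at a time with the closed forms $v_r^*\wedge v_{r'}^*$. The total $\omega$ is closed and, by Remark~\ref{rmk:sym1}, symplectic.

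\emph{Necessity of the edge condition.} This is the main obstacle. Suppose some component $C$ has $|E_C|>|V_C|$. I would show every closed $2$-form is degenerate, by a counting argument rather than by looking at $C$ alone.

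Let $Z=\operatorname{span}\{[v_i,v_j]\}$, which is central of dimension $|E|$. For a closed $\omega$ and $z,z'\in Z$ we have $\omega(z,z')=0$: the term $z'=[x,y]$ gives $\omega(z,[x,y])=-\omega(x,[y,z])-\omega(y,[z,x])=0$. So $Z$ is isotropic. Closedness also says $\omega([x,y],w)$ is cyclically symmetric, so $\omega$ restricted to $Z\times V$ is governed by a trilinear cyclic form. Concretely, $\omega([v_i,v_j],v_k)=\omega([v_j,v_k],v_i)=\omega([v_k,v_i],v_j)$, which forces this to vanish unless $\{i,j,k\}$ is a triangle, in which case the three values are tied.

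The map $Z\to V^*$, $z\mapsto \omega(z,\cdot)$, must be injective for $\omega$ to be nondegenerate, since $Z$ is isotropic. Restricted to the edges of $C$, its image lies in $V_C^*$ after accounting for the triangle relations. So I would bound the rank by $|V_C|$ and conclude that $|E_C|>|V_C|$ forces a kernel. The delicate part is justifying this rank bound: the cyclic relations must reduce the independent data per edge enough that $|E_C|$ edges cannot inject into $|V_C|$ dimensions. This needs a careful analysis of the resulting matrix.
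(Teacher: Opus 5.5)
Your sufficiency half is correct and is what the paper does in Section~2. Unicyclic components are oriented centripetally and trees from the leaves to a root. Then $\sum\sigma_{i,j}$ pairs each non-root vertex with its unique outgoing edge, and the roots are paired by $v_r^*\wedge v_{r'}^*$; they are even in number by your parity count. The paper proves nothing beyond this, since necessity is quoted from \cite{PT}. Your necessity half, however, is not a proof as written, and two of its intermediate claims are false.

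Closedness does not make $\omega([x,y],w)$ cyclically symmetric. It says the cyclic sum $\omega([x,y],w)+\omega([y,w],x)+\omega([w,x],y)$ vanishes. Hence $\omega([v_i,v_j],v_k)$ need not vanish outside triangles. If $\{v_i,v_j\},\{v_j,v_k\}\in E$ but $\{v_i,v_k\}\notin E$, you only get $\omega([v_i,v_j],v_k)=-\omega([v_j,v_k],v_i)$. Indeed $v_k^*\wedge[v_i,v_j]^*-v_i^*\wedge[v_j,v_k]^*$ is closed and takes a nonzero value on $([v_i,v_j],v_k)$.

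What is true, and is all you need, is this. If $\{v_i,v_j\}$ is an edge of the component $C$ and $v_k\notin C$, then $[v_j,v_k]=[v_k,v_i]=0$, so the cocycle identity gives $\omega([v_i,v_j],v_k)=0$. You proved correctly that $Z$ is isotropic. Combining the two, for $z\in Z_C=\langle [v_i,v_j]:\{v_i,v_j\}\in E_C\rangle$ the functional $\omega(z,\cdot)$ is determined by its restriction to $\langle v_i: v_i\in C\rangle$, a space of dimension $|V_C|$.

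So $z\mapsto\omega(z,\cdot)|_{\langle v_i:\,v_i\in C\rangle}$ is a linear map from an $|E_C|$-dimensional space to an $|V_C|$-dimensional one. If $|E_C|>|V_C|$ it has a nonzero kernel, and every $z$ in that kernel lies in the radical of $\omega$. There is no ``delicate rank bound'' and no matrix to analyze. You had the decisive containment in hand, though you justified it by a false lemma, and did not see that a dimension count finishes the proof.

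This orthogonality of $[\g_C,\g_C]$ to the other components is exactly what goes beyond the global inequality $\dim Z(\g)\le\dim(\g/\g')$ used later in the paper. That inequality alone does not rule out, for example, $K_4$ together with two disjoint edges, where $|V|=|E|=8$ and $\dim Z(\g)=\dim(\g/\g')=8$.
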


If a graph $G$ satisfies the hypotheses of the theorem, then each connected component has at most one cycle. Moreover, each acyclic component must be paired with another acyclic component, since $|V|+|E|$ is even. Therefore, any graph satisfying the hypotheses of the theorem is a disjoint union of connected components of the following two types:
\begin{itemize}
	\item trees, taken in pairs;
	\item unicyclic graphs.
\end{itemize}

For each 2-step nilpotent Lie algebra associated with a graph in one of these classes, there is a symplectic form defined using the orientations introduced in Section~\ref{subsec:dig}.

Let $T$ be a tree with a fixed root $v_1$. We consider the digraph obtained by orienting $T$ from the leaves toward the root (see Section~\ref{subsec:dig}).

For example, for the rooted tree $T$:

\begin{center}
	\begin{tikzpicture}
		\tikzset{every loop/.style={looseness=30}}
		\node (1) at (0,-0) [draw,circle,inner sep=1.2pt,fill=black!100,label=90:{$v_1$}] {};
		\node (2) at (-1,-1)  [draw,circle,inner sep=1.2pt,fill=black!100,label=180:{}] {};
		\node (3) at (1,-1)  [draw,circle,inner sep=1.2pt,fill=black!100,label=0:{}] {};
		\node (4) at (-1,-2)  [draw,circle,inner sep=1.2pt,fill=black!100,label=180:{}] {};
		\node (5) at (0.25,-2)  [draw,circle,inner sep=1.2pt,fill=black!100,label=270:{}] {};
		\node (6) at (1.75,-2)  [draw,circle,inner sep=1.2pt,fill=black!100,label=0:{}] {};
		
		\path
		(1) edge [-, black] node[above=0] {} (2)
        (1) edge [-, black] node[above=0] {} (3)
        (2) edge [-, black] node[above=0] {} (4)
        (3) edge [-, black] node[above=0] {} (5)        
        (3) edge [-, black] node[above=0] {} (6);
	\end{tikzpicture}
\end{center}
the corresponding orientation is

\begin{center}
	\begin{tikzpicture}
		\tikzset{every loop/.style={looseness=30}}
		\node (1) at (0,-0) [draw,circle,inner sep=1.2pt,fill=black!100,label=90:{$v_1$}] {};
		\node (2) at (-1,-1)  [draw,circle,inner sep=1.2pt,fill=black!100,label=180:{}] {};
		\node (3) at (1,-1)  [draw,circle,inner sep=1.2pt,fill=black!100,label=0:{}] {};
		\node (4) at (-1,-2)  [draw,circle,inner sep=1.2pt,fill=black!100,label=180:{}] {};
		\node (5) at (0.25,-2)  [draw,circle,inner sep=1.2pt,fill=black!100,label=270:{}] {};
		\node (6) at (1.75,-2)  [draw,circle,inner sep=1.2pt,fill=black!100,label=0:{}] {};
		
		\path
		(1) edge [<-, black] node[above=0] {} (2)
        (1) edge [<-, black] node[above=0] {} (3)
        (2) edge [<-, black] node[above=0] {} (4)
        (3) edge [<-, black] node[above=0] {} (5)        
        (3) edge [<-, black] node[above=0] {} (6);
	\end{tikzpicture}
\end{center}

Now consider the 2-step nilpotent Lie algebra associated with the tree $T$. We define the closed 2-form
\begin{equation}
\sigma_{T,v_1}=\sum_{(v_i,v_j)\in E}\sigma_{i,j}.
\end{equation}
In the previous example, labeling the vertices as follows:

\begin{center}
	\begin{tikzpicture}
		\tikzset{every loop/.style={looseness=30}}
		\node (1) at (0,-0) [draw,circle,inner sep=1.2pt,fill=black!100,label=90:{$v_1$}] {};
		\node (2) at (-1,-1)  [draw,circle,inner sep=1.2pt,fill=black!100,label=180:{$v_2$}] {};
		\node (3) at (1,-1)  [draw,circle,inner sep=1.2pt,fill=black!100,label=0:{$v_3$}] {};
		\node (4) at (-1,-2)  [draw,circle,inner sep=1.2pt,fill=black!100,label=180:{$v_4$}] {};
		\node (5) at (0.25,-2)  [draw,circle,inner sep=1.2pt,fill=black!100,label=270:{$v_5$}] {};
		\node (6) at (1.75,-2)  [draw,circle,inner sep=1.2pt,fill=black!100,label=0:{$v_6$}] {};
		
		\path
		(1)  edge [<-, black] node[above=0] {} (2)
        (1)  edge [<-, black] node[above=0] {} (3)
        (2)  edge [<-, black] node[above=0] {} (4)
        (3)  edge [<-, black] node[above=0] {} (5)
        (3)  edge [<-, black] node[above=0] {} (6);
	\end{tikzpicture}
\end{center}
the resulting closed form is
\begin{eqnarray*}
	\sigma_{T,v_1}
	&=& \sigma_{4,2}+\sigma_{2,1}+\sigma_{5,3}+\sigma_{6,3}+\sigma_{3,1}\\
    &=& v_4^* \wedge [v_4,v_2]^* + v_2^* \wedge [v_2,v_1]^* + v_5^* \wedge [v_5,v_3]^* \\
    &&\quad +\, v_6^* \wedge [v_6,v_3]^* + v_3^* \wedge [v_3,v_1]^*.
\end{eqnarray*}
This may be represented as

\begin{center}
	\begin{tikzpicture}
		\tikzset{every loop/.style={looseness=30}}
		\node (1) at (0,-0) [draw,circle,inner sep=1.2pt,fill=black!100,label=90:{$v_1$}] {};
		\node (2) at (-1,-1)  [draw,circle,inner sep=1.2pt,fill=black!100,label=180:{$v_2$}] {};
		\node (3) at (1,-1)  [draw,circle,inner sep=1.2pt,fill=black!100,label=0:{$v_3$}] {};
		\node (4) at (-1,-2)  [draw,circle,inner sep=1.2pt,fill=black!100,label=180:{$v_4$}] {};
		\node (5) at (0.25,-2)  [draw,circle,inner sep=1.2pt,fill=black!100,label=270:{$v_5$}] {};
		\node (6) at (1.75,-2)  [draw,circle,inner sep=1.2pt,fill=black!100,label=0:{$v_6$}] {};
		
		\path
		(1)  edge [<-, black] node[above=0] {} (2)
        (1)  edge [<-, black] node[above=0] {}(3)
        (2)  edge [<-, black] node[above=0] {} (4)
        (3) edge [<-, black] node[above=0] {} (5)
        (3)  edge [<-, black] node[above=0] {} (6);
  
		\coordinate (mid_v4_v2) at ($ (4)!0.65!(2) $);
		\coordinate (mid_v2_v1) at ($ (1)!0.35!(2) $);
		\coordinate (mid_v3_v1) at ($ (3)!0.65!(1) $);
		\coordinate (mid_v5_v3) at ($ (5)!0.65!(3) $);
		\coordinate (mid_v6_v3) at ($ (6)!0.65!(3) $);
		
		\draw[->,thick,bend left=65,red] (4) to (mid_v4_v2);
		\draw[->,thick,bend left=55,red] (2) to (mid_v2_v1);
		\draw[->,thick,bend right=65,red] (3) to (mid_v3_v1);
		\draw[->,thick,bend left=65,red] (5) to (mid_v5_v3);
		\draw[->,thick,bend right=65,red] (6) to (mid_v6_v3);
		
	\end{tikzpicture}
\end{center}

%-----------------------------------------------------------------
\subsection{Two trees}\label{subsec:tree}

Let $T_1$ and $T_2$ be two trees, and consider the $2$-step nilpotent Lie algebra associated with their disjoint union. 
Choosing roots $v_1$ and $v_2$, we define the symplectic form (see Remark \ref{rmk:sym1})
\begin{equation*}
\sigma=\sigma_{T_1,v_1}+\sigma_{T_2,v_2}+{v_1}^*\wedge {v_2}^*.
\end{equation*}

For example, consider the union of the following two trees, each directed from the leaves toward the root:

\medskip

\begin{center}
\begin{tikzpicture}
\tikzset{every loop/.style={looseness=30}}
\node (1) at (0,0) [draw,circle,inner sep=1pt,fill=black!100,label=90:{$v_1$}] {};
\node (2) at (-1,-1) [draw,circle,inner sep=1pt,fill=black!100,label=90:{}] {};
\node (3) at (1,-1) [draw,circle,inner sep=1pt,fill=black!100,label=90:{}] {};
\node (4) at (-1.75,-2) [draw,circle,inner sep=1pt,fill=black!100,label=90:{}] {};
\node (5) at (-0.35,-2)  [draw,circle,inner sep=1pt,fill=black!100,label=180:{}] {};
\node (6) at (0.25,-2)  [draw,circle,inner sep=1pt,fill=black!100,label=180:{}] {};
\node (7) at (1.75,-2)  [draw,circle,inner sep=1pt,fill=black!100,label=0:{}] {};
\node (8) at (4.5,0)  [draw,circle,inner sep=1pt,fill=black!100,label=90:{$v_2$}] {};
\node (9) at (3.5,-1)  [draw,circle,inner sep=1pt,fill=black!100,label=180:{}] {};
\node (10) at (5.5,-1)  [draw,circle,inner sep=1pt,fill=black!100,label=0:{}] {};
\node (11) at (3.5,-2)  [draw,circle,inner sep=1pt,fill=black!100,label=180:{}] {};
\node (12) at (4.75,-2)  [draw,circle,inner sep=1pt,fill=black!100,label=0:{}] {};
\node (13) at (6.25,-2)  [draw,circle,inner sep=1pt,fill=black!100,label=0:{}] {};

\path
(1)  edge [<-, black] node[above=0] {} (2)
(1)  edge [<-, black] node[above=0] {} (3)
(2)  edge [<-, black] node[above=0] {} (4)
(2)  edge [<-, black] node[above=0] {} (5)
(3)  edge [<-, black] node[above=0] {} (6)
(3)  edge [<-, black] node[above=0] {} (7)
(8)  edge [<-, black] node[above=0] {} (9)
(8)  edge [<-, black] node[above=0] {} (10)
(9)  edge [<-, black] node[above=0] {} (11)
(10)  edge [<-, black] node[above=0] {} (12)
(10)  edge [<-, black] node[above=0] {} (13);
\end{tikzpicture}
\end{center}

The corresponding symplectic form may be represented as follows:

\begin{center}
\begin{tikzpicture}
\tikzset{every loop/.style={looseness=30}}
% Nodos
\node (1) at (0,0) [draw,circle,inner sep=1pt,fill=black!100,label=90:{$v_1$}] {};
\node (2) at (-1,-1) [draw,circle,inner sep=1pt,fill=black!100,label=90:{}] {};
\node (3) at (1,-1) [draw,circle,inner sep=1pt,fill=black!100,label=90:{}] {};
\node (4) at (-1.75,-2) [draw,circle,inner sep=1pt,fill=black!100,label=90:{}] {};
\node (5) at (-0.35,-2)  [draw,circle,inner sep=1pt,fill=black!100,label=180:{}] {};
\node (6) at (0.25,-2)  [draw,circle,inner sep=1pt,fill=black!100,label=180:{}] {};
\node (7) at (1.75,-2)  [draw,circle,inner sep=1pt,fill=black!100,label=0:{}] {};
\node (8) at (4.5,0)  [draw,circle,inner sep=1pt,fill=black!100,label=90:{$v_2$}] {};
\node (9) at (3.5,-1)  [draw,circle,inner sep=1pt,fill=black!100,label=180:{}] {};
\node (10) at (5.5,-1)  [draw,circle,inner sep=1pt,fill=black!100,label=0:{}] {};
\node (11) at (3.5,-2)  [draw,circle,inner sep=1pt,fill=black!100,label=180:{}] {};
\node (12) at (4.75,-2)  [draw,circle,inner sep=1pt,fill=black!100,label=0:{}] {};
\node (13) at (6.25,-2)  [draw,circle,inner sep=1pt,fill=black!100,label=0:{}] {};

\path
(1)  edge [<-, black] node[above=0] {} (2)
(1)  edge [<-, black] node[above=0] {} (3)
(2)  edge [<-, black] node[above=0] {} (4)
(2)  edge [<-, black] node[above=0] {} (5)
(3)  edge [<-, black] node[above=0] {} (6)
(3)  edge [<-, black] node[above=0] {} (7)
(8)  edge [<-, black] node[above=0] {} (9)
(8)  edge [<-, black] node[above=0] {} (10)
(9)  edge [<-, black] node[above=0] {} (11)
(10)  edge [<-, black] node[above=0] {} (12)
(10)  edge [<-, black] node[above=0] {} (13);

% --- Puntos medios de las aristas ---
\coordinate (mid_v2_v1) at ($ (1)!0.35!(2) $);
\coordinate (mid_v3_v1) at ($ (1)!0.35!(3) $);
\coordinate (mid_v7_v3) at ($ (3)!0.35!(7) $);
\coordinate (mid_v6_v3) at ($ (3)!0.35!(6) $);
\coordinate (mid_v4_v2) at ($ (2)!0.35!(4) $);
\coordinate (mid_v5_v2) at ($ (2)!0.35!(5) $);
\coordinate (mid_v8_v9) at ($ (8)!0.35!(9) $);
\coordinate (mid_v8_v10) at ($ (8)!0.35!(10) $);
\coordinate (mid_v9_v11) at ($ (9)!0.35!(11) $);
\coordinate (mid_v10_v12) at ($ (10)!0.35!(12) $);
\coordinate (mid_v10_v13) at ($ (10)!0.35!(13) $);

% --- Flechas curvas ---
\draw[->,thick,bend left=35,red] (1) to (8);
\draw[->,thick,bend left=55,red] (2) to (mid_v2_v1);
\draw[->,thick,bend right=55, red] (3) to (mid_v3_v1);
\draw[->,thick,bend left=55,red] (4) to (mid_v4_v2);
\draw[->,thick,bend right=55,red] (7) to (mid_v7_v3);
\draw[->,thick,bend left=55,red] (6) to (mid_v6_v3);
\draw[->,thick,bend right=55,red] (5) to (mid_v5_v2);
\draw[->,thick,bend left=55,red] (9) to (mid_v8_v9);
\draw[->,thick,bend right=55,red] (10) to (mid_v8_v10);
\draw[->,thick,bend left=55,red] (11) to (mid_v9_v11);
\draw[->,thick,bend left=55,red] (12) to (mid_v10_v12);
\draw[->,thick,bend right=55,red] (13) to (mid_v10_v13);
\end{tikzpicture}
\end{center}

%------------------------------------------------------
\subsection{One unicyclic graph}\label{subsec:cyc}

For the $2$-step nilpotent Lie algebra associated with the $n$-cycle ($n\ge 3$),

\begin{center}
\begin{tikzpicture}
\tikzset{every loop/.style={looseness=30}}
\node (1) at (1,0) [draw,circle,inner sep=1.2pt,fill=black!100,label=0:{$v_4$}] {};
\node (2) at (0.5,{sqrt(3)/2}) [draw,circle,inner sep=1.2pt,fill=black!100,label=90:{$v_3$}] {};
\node (3) at (-0.5,{sqrt(3)/2}) [draw,circle,inner sep=1.2pt,fill=black!100,label=90:{$v_2$}] {};
\node (4) at (-1,0) [draw,circle,inner sep=1.2pt,fill=black!100,label=180:{$v_1$}] {};
\node (5) at (-0.5,{-sqrt(3)/2}) [draw,circle,inner sep=1.2pt,fill=black!100,label=270:{$v_n$}] {};
\node (6) at (0.5,{-sqrt(3)/2}) [draw,circle,inner sep=1.2pt,fill=black!100,label=270:{$v_5$}] {};
\node (dots) at (0,-0.9) {$\ldots$};

\path
(1) edge (2)
(2) edge (3)
(3) edge (4)
(4) edge (5)
(6) edge (1);
\end{tikzpicture}
\end{center}

choose one of the two cyclic orientations and consider the symplectic form
\[
\sigma_{C_n} = \sum_{(v_i,v_j)\in E} \sigma_{i,j}.
\]
For the counterclockwise orientation, it may be represented as follows:

\begin{center}
\begin{tikzpicture}
\tikzset{every loop/.style={looseness=30}}
\node (1) at (-1,0) [draw,circle,inner sep=1.2pt,fill=black!100,label=180:{$v_1$}] {};
\node (2) at (-0.5,{sqrt(3)/2}) [draw,circle,inner sep=1.2pt,fill=black!100,label=90:{$v_2$}] {};
\node (3) at (0.5,{sqrt(3)/2}) [draw,circle,inner sep=1.2pt,fill=black!100,label=90:{$v_3$}] {};
\node (4) at (1,0) [draw,circle,inner sep=1.2pt,fill=black!100,label=0:{$v_4$}] {};
\node (5) at (0.5,{-sqrt(3)/2}) [draw,circle,inner sep=1.2pt,fill=black!100,label=270:{$v_5$}] {};
\node (6) at (-0.5,{-sqrt(3)/2}) [draw,circle,inner sep=1.2pt,fill=black!100,label=270:{$v_n$}] {};

\node (dots) at (0,-0.9) {$\ldots$};

\path
(1) edge[<-] (2)
(2) edge[<-] (3)
(3) edge[<-] (4)
(4) edge[<-] (5)
(6) edge[<-] (1);
		
\coordinate (mid_vn_v1) at ($ (1)!0.65!(6) $);
\coordinate (mid_v1_v2) at ($ (1)!0.35!(2) $);
\coordinate (mid_v2_v3) at ($ (2)!0.35!(3) $);
\coordinate (mid_v3_v4) at ($ (3)!0.35!(4) $);
\coordinate (mid_v4_v5) at ($ (4)!0.35!(5) $);
		
\draw[->,thick,bend right=55, red] (1) to (mid_vn_v1);
\draw[->,thick,bend right=55,red] (2) to (mid_v1_v2);
\draw[->,thick,bend right=55,red] (3) to (mid_v2_v3);
\draw[->,thick,bend right=55,red] (4) to (mid_v3_v4);
\draw[->,thick,bend right=55,red] (5) to (mid_v4_v5);
\end{tikzpicture}
\end{center}

Now consider the $2$-step nilpotent Lie algebra associated with a unicyclic graph obtained by gluing rooted trees $T_1,\dots,T_n$ at the vertices $v_1,\dots,v_n$ of the cycle. We orient the graph centripetally (see Section~\ref{subsec:dig}) and consider the symplectic form
\begin{equation*}
\sigma=\sigma_{C_n} + \sum_{i=1}^n \sigma_{T_i,v_i}.
\end{equation*}

For example, for the unicyclic graph

\begin{center}
 \begin{tikzpicture}
\tikzset{every loop/.style={looseness=30}}
\node (1) at (0,-1)[draw,circle,inner sep=1pt,fill=black!100,label=0:{$v_5$}] {};
\node (2) at (0.9511,-0.3090)[draw,circle,inner sep=1pt,fill=black!100,label=15:{$v_4$}] {};
\node (3) at (0.5878,0.8090)[draw,circle,inner sep=1pt,fill=black!100,label=90:{$v_3$}] {};
\node (4) at (-0.5878,0.8090)[draw,circle,inner sep=1pt,fill=black!100,label=90:{$v_2$}] {};
\node (5) at (-0.9511,-0.3090)[draw,circle,inner sep=1pt,fill=black!100,label=180:{$v_1$}] {};
\node (6) at (0,-1.85)  [draw,circle,inner sep=1pt,fill=black!100,label=270:{}] {};
\node (7) at (1.95,-0.3090)  [draw,circle,inner sep=1pt,fill=black!100,label=270:{}] {};
\node (8) at (2.511,0.517)  [draw,circle,inner sep=1pt,fill=black!100,label=270:{}] {};
\node (9) at (2.690,-0.982)  [draw,circle,inner sep=1pt,fill=black!100,label=270:{}] {};
\node (10) at (-1.80,0.50)  [draw,circle,inner sep=1pt,fill=black!100,label=270:{}] {};
\node (11) at (-1.9511,-0.8090)  [draw,circle,inner sep=1pt,fill=black!100,label=270:{}] {};
\node (12) at (2.5,-0.31)  [label=0:{$T_4$}] {};
\node (13) at (0,-2.8)  [label=90:{$T_5$}] {};
\node (14) at (-1.8,-0.31)  [label=180:{$T_1$}] {};

\path
(1) edge [-]  (2)
(2) edge [-]  (3)
(3) edge [-]  (4)
(4) edge [-]  (5)
(5) edge [-]  (1)
(1) edge [-]  (6)
(2) edge [-]  (7)
(7) edge [-]  (8)
(7) edge [-]  (9)
(5) edge [-]  (10)
(5) edge [-]  (11);
\end{tikzpicture}
\end{center}

we choose the orientation

\begin{center}
 \begin{tikzpicture}
\tikzset{every loop/.style={looseness=30}}
\node (1) at (0,-1)[draw,circle,inner sep=1pt,fill=black!100,label=0:{$v_5$}] {};
\node (2) at (0.9511,-0.3090)[draw,circle,inner sep=1pt,fill=black!100,label=15:{$v_4$}] {};
\node (3) at (0.5878,0.8090)[draw,circle,inner sep=1pt,fill=black!100,label=90:{$v_3$}] {};
\node (4) at (-0.5878,0.8090)[draw,circle,inner sep=1pt,fill=black!100,label=90:{$v_2$}] {};
\node (5) at (-0.9511,-0.3090)[draw,circle,inner sep=1pt,fill=black!100,label=180:{$v_1$}] {};
\node (6) at (0,-1.85)  [draw,circle,inner sep=1pt,fill=black!100,label=270:{}] {};
\node (7) at (1.95,-0.3090)  [draw,circle,inner sep=1pt,fill=black!100,label=270:{}] {};
\node (8) at (2.511,0.517)  [draw,circle,inner sep=1pt,fill=black!100,label=270:{}] {};
\node (9) at (2.690,-0.982)  [draw,circle,inner sep=1pt,fill=black!100,label=270:{}] {};
\node (10) at (-1.80,0.50)  [draw,circle,inner sep=1pt,fill=black!100,label=270:{}] {};
\node (11) at (-1.9511,-0.8090)  [draw,circle,inner sep=1pt,fill=black!100,label=270:{}] {};

\path
(1) edge [->]  (2)
(2) edge [->]  (3)
(3) edge [->]  (4)
(4) edge [->]  (5)
(5) edge [->]  (1)
(1) edge [<-]  (6)
(2) edge [<-]  (7)
(7) edge [<-]  (8)
(7) edge [<-]  (9)
(5) edge [<-]  (10)
(5) edge [<-]  (11);
\end{tikzpicture}
\end{center}

The corresponding symplectic form is
\[
\sigma= \sigma_{C_5}+\sigma_{T_1,v_1}+\dots+\sigma_{T_5,v_5}.
\]
It may be represented as follows:

\begin{center}
 \begin{tikzpicture}
\tikzset{every loop/.style={looseness=30}}
\node (1) at (0,-1)[draw,circle,inner sep=1pt,fill=black!100,label=0:{$v_5$}] {};
\node (2) at (0.9511,-0.3090)[draw,circle,inner sep=1pt,fill=black!100,label=15:{$v_4$}] {};
\node (3) at (0.5878,0.8090)[draw,circle,inner sep=1pt,fill=black!100,label=90:{$v_3$}] {};
\node (4) at (-0.5878,0.8090)[draw,circle,inner sep=1pt,fill=black!100,label=90:{$v_2$}] {};
\node (5) at (-0.9511,-0.3090)[draw,circle,inner sep=1pt,fill=black!100,label=180:{$v_1$}] {};
\node (6) at (0,-1.85)  [draw,circle,inner sep=1pt,fill=black!100,label=270:{}] {};
\node (7) at (1.95,-0.3090)  [draw,circle,inner sep=1pt,fill=black!100,label=270:{}] {};
\node (8) at (2.511,0.517)  [draw,circle,inner sep=1pt,fill=black!100,label=270:{}] {};
\node (9) at (2.690,-0.982)  [draw,circle,inner sep=1pt,fill=black!100,label=270:{}] {};
\node (10) at (-1.80,0.50)  [draw,circle,inner sep=1pt,fill=black!100,label=270:{}] {};
\node (11) at (-1.9511,-0.8090)  [draw,circle,inner sep=1pt,fill=black!100,label=270:{}] {};

\path
(1) edge [->]  (2)
(2) edge [->]  (3)
(3) edge [->]  (4)
(4) edge [->]  (5)
(5) edge [->]  (1)
(1) edge [<-]  (6)
(2) edge [<-]  (7)
(7) edge [<-]  (8)
(7) edge [<-]  (9)
(5) edge [<-]  (10)
(5) edge [<-]  (11);

\coordinate (mid_v1_v2) at ($ (1)!0.65!(2) $);
\coordinate (mid_v2_v3) at ($ (2)!0.65!(3) $); 
\coordinate (mid_v3_v4) at ($ (3)!0.65!(4) $);
\coordinate (mid_v4_v5) at ($ (4)!0.65!(5) $); 
\coordinate (mid_v5_v1) at ($ (5)!0.65!(1) $);
\coordinate (mid_v2_v7) at ($ (2)!0.35!(7) $); 
\coordinate (mid_v7_v8) at ($ (7)!0.35!(8) $); 
\coordinate (mid_v7_v9) at ($ (7)!0.35!(9) $); 
\coordinate (mid_v1_v6) at ($ (1)!0.35!(6) $); 
\coordinate (mid_v5_v10) at ($ (5)!0.35!(10) $); 
\coordinate (mid_v5_v11) at ($ (5)!0.35!(11) $); 

\draw[->,thick,bend right=40,red] (1) to (mid_v1_v2);
\draw[->,thick,bend right=40,red] (2) to (mid_v2_v3);
\draw[->,thick,bend right=40,red] (3) to (mid_v3_v4);
\draw[->,thick,bend right=40,red] (4) to (mid_v4_v5);
\draw[->,thick,bend right=40,red] (5) to (mid_v5_v1);
\draw[->,thick,bend left=50,red] (6) to (mid_v1_v6);
\draw[->,thick,bend right=60,red] (7) to (mid_v2_v7);
\draw[->,thick,bend right=40,red] (8) to (mid_v7_v8);
\draw[->,thick,bend left=40,red] (9) to (mid_v7_v9);
\draw[->,thick,bend right=40,red] (10) to (mid_v5_v10);
\draw[->,thick,bend left=40,red] (11) to (mid_v5_v11);
\end{tikzpicture}
\end{center}
%------------------------------------------------------

%===========================================================
\section{The family of $k$-step nilpotent Lie algebras $\g(\kappa,G)$}

Given a digraph $G$ and a function $\kappa:E\to\N_{\ge 2}$, we define a nilpotent Lie algebra $\g(\kappa,G)$. 
This algebra is a quotient of the $k$-step nilpotent Lie algebra associated with the graph $G$, denoted by $\g(k,G)$, 
where $k$ is the maximum value of $\kappa$.
%---------------------------------------------------------------------------
\subsection{The $k$-step nilpotent Lie algebra associated with a graph}

Let $G=(V,E)$ be a graph, with vertex set $V=\{v_1,\dots,v_m\}$. For $k\ge 2$, the $k$-step nilpotent Lie algebra associated with $G$ is the quotient
\[
\g(k,G)=L_{(k)}(m)/I,
\]
where $L_{(k)}(m)$ is the free $k$-step nilpotent Lie algebra on $m$ generators, and $I$ is the ideal generated by
\[
\{[v_i,v_j]: \{v_i,v_j\}\notin E\}.
\]

The free $k$-step nilpotent Lie algebra is naturally graded:
\[
L_{(k)}(m)=L_1\oplus L_2\oplus\cdots\oplus L_k,
\]
and $I$ is a homogeneous ideal:
\[
I=I_2\oplus\cdots\oplus I_k,
\]
where $I_j=I\cap L_j\subseteq L_j$. Hence $\g(k,G)$ is naturally graded:
\[
\begin{aligned}
\g(k,G)&=L_{(k)}(m)/I\\
&=(L_1\oplus\cdots\oplus L_k)/(I_2\oplus\cdots\oplus I_k)\\
&\cong L_1\oplus L_2/I_2\oplus\cdots\oplus L_k/I_k\\
&=V_1\oplus V_2\oplus\cdots\oplus V_k.
\end{aligned}
\]
Moreover, this decomposition is a Carnot grading, that is,
\[
\g(k,G)^i = V_i\oplus V_{i+1}\oplus\cdots\oplus V_k,
\qquad \text{for all } 1\le i\le k.
\]

\begin{remark}
If the graph $G$ has connected components $G_1,\dots,G_\ell$, then
\[
\g(k,G)\simeq \g(k,G_1)\oplus\cdots\oplus\g(k,G_\ell).
\]
In particular, each isolated vertex of $G$ corresponds to a $1$-dimensional abelian direct summand of $\g(k,G)$.
\end{remark}

%-----------------------------------------
\subsubsection{A basis for $\g(k,G)$}\label{subsec:basis}

Given a graph $G$, let $G'$ denote its complementary graph, and let $m$ be the number of vertices of $G$. 
In \cite[Theorem 5.12, Section 5]{W}, Wade presented a combinatorial algorithm that, 
given the ordered vertex set $v_1<v_2<\dots<v_m$ as input, outputs a basis for $\g(k,G)$. 

Starting from $G'$, the algorithm recursively constructs Lyndon elements (represented by words in the vertices) and then the corresponding Lyndon brackets, thereby yielding a basis of $\g(k,G)$. 
More precisely, it provides bases $B_j$ for each $V_j$, where the corresponding Lyndon elements all have length $j$.
The basis for $\g(k,G)$, is then
\begin{equation}\label{eqn:wade-basis}
    B=B_1 \cup\dots\cup B_k.
\end{equation}

\begin{example}\label{ex:grp}
	As an example (see \cite[Examples 5.13 and 5.16]{W}), consider the simple graph $G$ (left) and its complementary graph $G'$ (right).
	
	\vskip.4cm
	
	\begin{center}
		\begin{tikzpicture}[scale=.8]
			\filldraw (3.5,-2) circle (2.5pt);
			\node at (3.1, -2) {$v_1$};
			\filldraw (5,-1) circle (2.5pt);
			\node at (5.4,-1) {$v_2$};
			\filldraw (5,-3) circle (2.5pt);
			\node at (5.4,-3) {$v_3$};
			\draw [line width=0.5pt] (3.5,-2)--(5,-3);
			\draw [line width=0.5pt] (3.5,-2)--(5,-1);

			\filldraw (10.5,-2) circle (2.5pt);
			\node at (10.1, -2) {$v_1$};
			\filldraw (12,-1) circle (2.5pt);
			\node at (12.4,-1) {$v_2$};
			\filldraw (12,-3) circle (2.5pt);
			\node at (12.4,-3) {$v_3$};
			\draw [line width=0.5pt] (12,-1)--(12,-3);
		\end{tikzpicture}
	\end{center}
	The corresponding basis for $\g(4,G)$, $B=B_1\cup\dots\cup B_4$, is given by
	\begin{align*}
		B_1=&\{v_1,v_2,v_3\};\\
		B_2=&\big\{[v_1,v_2],[v_1,v_3]\big\};\\
		B_3=&\big\{[v_1,[v_1,v_2]], [v_1,[v_1,v_3]], [[v_1,v_2],v_2], [[v_1,v_3],v_2], [[v_1,v_3],v_3]\big\};\\
		B_4=&\big\{[v_1, [v_1, [v_1, v_2]]], [v_1, [v_1, [v_1, v_3]]], [[[v_1, v_2], v_2], v_2], \\
        &[[[v_1, v_3], v_2], v_2],
		[[[v_1, v_3], v_3], v_2], 
		[[[v_1, v_3], v_3], v_3], [[v_1, [v_1, v_2]], v_2], \\
        &[[v_1, [v_1, v_3]], v_2],
		[[v_1, [v_1, v_3]], v_3], [[v_1, v_2], [v_1, v_3]]\big\}.
	\end{align*}
\end{example}

\begin{remark}\label{rmk:basis1}
If $i<j$, and hence $v_i<v_j$, the element
\[
[\dots [[v_i,v_j],v_j],\dots,v_j]
\]
is a Wade basis element.

If \(i>j\), and hence $v_i>v_j$, the element
\[
[\dots [[v_i,v_j],v_j],\dots,v_j]
\]
is not a Wade basis element, but if the multiplicity of $v_j$ is $m_j$, then
\[ 
[\dots [[v_i,v_j],v_j],\dots,v_j] = (-1)^{m_j} [v_j,[v_j,\dots[v_j,v_i]\dots]],
\]
and \([v_j,[v_j,\dots[v_j,v_i]\dots]]\) is a Wade basis element.

\end{remark}

\medskip

Each basis element $\omega$ has a multidegree $\mult(\omega)$, the $m$-tuple of multiplicities of each vertex in the underlying word. 
In Example~\ref{ex:grp}, we have
\[
\mult([[v_1,v_2],v_2])=(1,2,0)
\qquad\text{and}\qquad
\mult([[v_1, [v_1, v_3]], v_2])=(2,1,1).
\]
Given two basis elements $\omega_i$ and $\omega_j$, their bracket is
\[
[\omega_i,\omega_j]=\sum_k c_k \omega_k,
\]
where $c_k\ne 0$ only if
\begin{equation}\label{eqn:multidegree}
	\mult(\omega_k)= \mult(\omega_i)+\mult(\omega_j).
\end{equation}

We further denote by $\mult_i$ the $i$-th coordinate of $\mult$, so that $\mult_i(\omega)$ is the multiplicity of the vertex $v_i$ in $\omega$. For $\omega=[[v_1,v_2],v_2]$, we have $\mult_1(\omega)=1$, $\mult_2(\omega)=2$, and $\mult_3(\omega)=0$.

%----------------------------------------------------------
\subsection{The family of Lie algebras $\g(\kappa,G)$}\label{subsec:g-kappa}

Given a digraph $G$ and a natural number $k\ge 2$, consider the $k$-step nilpotent Lie algebra $\g(k,G)$ associated with the underlying graph $G$
with vertices $v_1<\dots<v_m$.
Let $B = B_1 \cup \cdots \cup B_k$ be the corresponding Wade basis of $\g(k,G)$.

For each edge $(v_i,v_j)\in E$, we define inductively
\[
w_{i,j}^1=v_i, \qquad w_{i,j}^2=[v_i,v_j], \qquad w_{i,j}^{l}=[ w_{i,j}^{l-1},v_j], \quad \text{for } 3\leq l\leq k.
\]
Note that, if $i<j$, then $w_{i,j}^l$ is a Wade basis element for all $1\leq l\leq k$. 
Moreover, $w_{i,j}^l\in B_l$ for all $2\leq l\leq k$.
If $i>j$, then 
\[ w_{i,j}^l=(-1)^{l-1} [v_j,[v_j,\dots[v_j,v_i]\dots]], \] 
where $[v_j,[v_j,\dots[v_j,v_i]\dots]]$ is a Wade basis element in $B_l$ (see Remark \ref{rmk:basis1}).

\begin{definition}\label{def:wade-adapted}
  Given a digraph $G=(V,E)$, with vertices $v_1<\dots <v_m$ and corresponding Wade basis $B$,
  we define the adapted Wade basis to be the basis obtained from $B$ by replacing for each 
  oriented edge $(v_i,v_j)\in E$ with $i>j$ each element $[v_j,[v_j,\dots[v_j,v_i]\dots]]$ by $w_{i,j}^l$, for all $2\leq l\leq k$.
\end{definition}

Now, for every oriented edge $(v_i,v_j)\in E$, $w_{i,j}^l$ is an element of the adapted Wade basis for all $l$. 

\medskip

The multidegree $\mult$ is well defined on the adapted Wade basis and satisfies \eqref{eqn:multidegree}.

\begin{remark}\label{rmk:1}
For any $(v_i,v_j)\in E$ and $l\geq 2$, we have
\begin{align*}
\mult_j(w_{i,j}^l)&=l-1,\\
\mult_i(w_{i,j}^l)&=1,\\
\mult_h(w_{i,j}^l)&=0, \quad \text{for all } h\neq i,j.
\end{align*}
Moreover, $w_{i,j}^l$ is the unique basis element with this multidegree.
\end{remark}

From now on, by basis elements we will mean elements of the adapted Wade basis.

\begin{lemma}\label{lemma:1}
Let $G$ be a digraph. Given a natural number $2\leq l\leq k$ and two basis elements $b_s$ and $b_t$ such that
\[
\mult(w_{i,j}^l)=\mult(b_s)+\mult(b_t),
\]
then $b_s=v_j$ or $b_s=w_{i,j}^{l-1}$.
\end{lemma}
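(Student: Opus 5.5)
The plan is to compare multidegrees coordinate by coordinate and use two uniqueness facts: Remark~\ref{rmk:1}, and the observation that a basis element whose multidegree is supported on a single vertex must be that vertex.

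\textbf{Setup.} By Remark~\ref{rmk:1}, $\mult(w_{i,j}^l)$ has entry $1$ in position $i$, entry $l-1$ in position $j$, and $0$ elsewhere. Every basis element lies in some $B_n$ with $n\ge 1$ and has total degree $n$, so neither $\mult(b_s)$ nor $\mult(b_t)$ is zero. Since multiplicities are nonnegative, both $\mult(b_s)$ and $\mult(b_t)$ are supported on $\{i,j\}$. Write $\mult(b_s)=a\,e_i+c\,e_j$ with $a\in\{0,1\}$ and $0\le c\le l-1$. Then $\mult(b_t)=(1-a)e_i+(l-1-c)e_j$.

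\textbf{Auxiliary fact.} A basis element whose multidegree is $n\,e_h$ must be $v_h$ with $n=1$. The reason is that the multidegree-$n e_h$ component of $\g(k,G)$ is a quotient of the corresponding component of the free Lie algebra. That component lies in the free Lie algebra on the single generator $v_h$, which is $\K v_h$ since $[v_h,v_h]=0$. Hence it vanishes for $n\ge 2$. This is the only point needing a short justification beyond the preceding remarks. I expect it to be the main (mild) obstacle, because one must make sure the adapted Wade basis respects the multigrading. That holds since the $I_j$ are multihomogeneous and the basis consists of bracket monomials.

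\textbf{Case analysis.}
\begin{itemize}
\item If $a=0$, then $\mult(b_s)=c\,e_j$. By the auxiliary fact, $b_s=v_j$.
\item If $a=1$, then $\mult(b_t)=(l-1-c)e_j$, which is nonzero. By the auxiliary fact, $b_t=v_j$ and $c=l-2$. So $\mult(b_s)=e_i+(l-2)e_j$.
\begin{itemize}
\item If $l\ge 3$, this equals $\mult(w_{i,j}^{l-1})$. By the uniqueness part of Remark~\ref{rmk:1}, $b_s=w_{i,j}^{l-1}$.
\item If $l=2$, then $\mult(b_s)=e_i$, and the auxiliary fact gives $b_s=v_i=w_{i,j}^1=w_{i,j}^{l-1}$.
\end{itemize}
\end{itemize}
In every case $b_s=v_j$ or $b_s=w_{i,j}^{l-1}$, which proves Lemma~\ref{lemma:1}.
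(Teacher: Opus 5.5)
Your proposal is correct and follows essentially the same route as the paper. Both arguments restrict the supports to $\{i,j\}$, split on which of $b_s,b_t$ carries $v_i$, use that a basis element involving a single letter must be that letter, and conclude with the uniqueness in Remark~\ref{rmk:1}. Your justification of the single-letter fact via the one-generator free Lie algebra, and your separate treatment of $l=2$ (where $w_{i,j}^{1}=v_i$ is not covered by Remark~\ref{rmk:1}), are small refinements of steps the paper leaves implicit.
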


\begin{proof}
Let $b_s,b_t\in B$ be such that
\[
\mult(w_{i,j}^l)=\mult(b_s)+\mult(b_t).
\]
By Remark~\ref{rmk:1}, we have
\begin{align*}
l-1&=\mult_j(b_s)+\mult_j(b_t),\\
1&=\mult_i(b_s)+\mult_i(b_t),\\
0&=\mult_h(b_s)+\mult_h(b_t), \quad \text{for all } h\neq i,j.
\end{align*}
Hence $\mult_h(b_s)=\mult_h(b_t)=0$ for all $h\neq i,j$. Also, either $\mult_i(b_s)=0$ or $\mult_i(b_t)=0$. In the first case, the word associated with $b_s$ contains only the letter $v_j$, and therefore $b_s=v_j$. In the second case, similarly, $b_t=v_j$, and then
\begin{align*}
l-2&=\mult_j(b_s),\\
1&=\mult_i(b_s),\\
0&=\mult_h(b_s), \quad \text{for all } h\neq i,j.
\end{align*}
Therefore, by Remark~\ref{rmk:1}, we obtain $b_s=w_{i,j}^{l-1}$.
\end{proof}

\begin{proposition}
Let $G=(V,E)$ be a digraph and let $k\ge 2$ be a natural number. 
For each oriented edge $(v_i,v_j)\in E$ and each natural number $1\leq \kappa_{i,j}\leq k$, the subspace
\[
I_{i,j}^{\kappa_{i,j}}=
\left\langle B\setminus\big(\{v_j\}\cup\{w_{i,j}^l: 1\leq l\leq \kappa_{i,j}\}\big)\right\rangle
\]
is an ideal of $\g(k,G)$.
\end{proposition}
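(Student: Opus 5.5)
The plan is to check the ideal property directly on basis elements, using the multidegree bookkeeping of \eqref{eqn:multidegree} together with Lemma~\ref{lemma:1} and Remark~\ref{rmk:1}. Since $I_{i,j}^{\kappa_{i,j}}$ is spanned by a subset of the adapted Wade basis $B$, it suffices to take a basis element $b\in B$ lying in $I_{i,j}^{\kappa_{i,j}}$ and an arbitrary basis element $b'\in B$. I will show that $[b',b]$, expanded in $B$, has zero coefficient on each excluded element. The excluded elements are $v_j$ and $w_{i,j}^l$ for $1\le l\le \kappa_{i,j}$.

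First I dispose of the degree-one excluded elements, namely $v_j$ and $w_{i,j}^1=v_i$. Both lie in $B_1=V_1$, while $[b',b]\in \g(k,G)^2=V_2\oplus\cdots\oplus V_k$. So their coefficients vanish trivially.

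Next, fix $2\le l\le \kappa_{i,j}$ and suppose, for contradiction, that $w_{i,j}^l$ has a nonzero coefficient in $[b',b]$. By \eqref{eqn:multidegree},
\[
\mult(w_{i,j}^l)=\mult(b')+\mult(b).
\]
Applying Lemma~\ref{lemma:1} with $(b_s,b_t)=(b,b')$ gives $b=v_j$ or $b=w_{i,j}^{l-1}$. Both are excluded elements, since $l-1\le\kappa_{i,j}$. This contradicts $b\in I_{i,j}^{\kappa_{i,j}}$, because $B$ is a basis and so excluded elements are not in the span of the remaining ones.

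For completeness, and in case one prefers to use the lemma with the roles reversed, one can also apply it with $(b_s,b_t)=(b',b)$, which gives $b'=v_j$ or $b'=w_{i,j}^{l-1}$.
\begin{itemize}
\item If $b'=v_j$, then $\mult(b)=\mult(w_{i,j}^{l-1})$. For $l-1\ge2$, Remark~\ref{rmk:1} gives $b=w_{i,j}^{l-1}$. For $l-1=1$, $b$ is the only basis element with multidegree the $i$-th unit vector, namely $v_i=w_{i,j}^1$. Either way $b$ is excluded.
\item If $b'=w_{i,j}^{l-1}$, then $b$ has multidegree equal to the $j$-th unit vector, so $b=v_j$, again excluded.
\end{itemize}

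Since the bracket is antisymmetric, $[b,b']=-[b',b]$, so this handles both $[\g,I]$ and $[I,\g]$. By linearity, $[\g(k,G),I_{i,j}^{\kappa_{i,j}}]\subseteq I_{i,j}^{\kappa_{i,j}}$.

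The only delicate point I expect is the bookkeeping at the boundary cases. Remark~\ref{rmk:1} asserts uniqueness only for $l\ge2$, so the case $l-1=1$ has to be handled separately as above. I also need that the adapted basis still satisfies \eqref{eqn:multidegree}, which the paper states. Everything else is a direct consequence of Lemma~\ref{lemma:1}.
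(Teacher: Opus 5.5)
Your proposal is correct and is essentially the paper's proof: both reduce to pairs of basis elements, use the multidegree condition \eqref{eqn:multidegree}, and invoke Lemma~\ref{lemma:1} to force the element of $I_{i,j}^{\kappa_{i,j}}$ to be $v_j$ or $w_{i,j}^{l-1}$. For the degree-one excluded elements $v_j$ and $v_i$, you use the grading $[\g,\g]\subseteq V_2\oplus\cdots\oplus V_k$ where the paper uses an equivalent multidegree count; your role-reversed case is unnecessary but harmless, and your explicit treatment of $l-1=1$ fills a small point left implicit in the paper's proof of Lemma~\ref{lemma:1}.
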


\begin{proof}
Given
\[
b_s\in B\setminus\big(\{v_j\}\cup\{w_{i,j}^l: 1\leq l\leq \kappa_{i,j}\}\big)
\qquad\text{and}\qquad
b_t\in B,
\]
their bracket is of the form
\[
[b_s,b_t]=\sum_{b_r\in B} c_r b_r,
\]
where $c_r\ne 0$ only if
\[
\mult(b_r)= \mult(b_s)+\mult(b_t).
\]
Therefore, it is enough to prove that this equality never holds for $b_r=v_j$ or for $b_r=w_{i,j}^l$, with $1\leq l\leq \kappa_{i,j}$.

\begin{enumerate}
\item Suppose that
\[
\mult(v_j)=\mult(b_s)+\mult(b_t).
\]
Then
\begin{align*}
1&=\mult_j(b_s)+\mult_j(b_t),\\
0&=\mult_h(b_s)+\mult_h(b_t), \quad \text{for all } h\neq j.
\end{align*}
This implies that $b_s=0$ or $b_t=0$, a contradiction.

\item Suppose that
\[
\mult(w_{i,j}^1)=\mult(b_s)+\mult(b_t).
\]
Then
\begin{align*}
1&=\mult_i(b_s)+\mult_i(b_t),\\
0&=\mult_h(b_s)+\mult_h(b_t), \quad \text{for all } h\neq i.
\end{align*}
Again, this implies that $b_s=0$ or $b_t=0$, a contradiction.

\item Suppose that
\[
\mult(w_{i,j}^l)=\mult(b_s)+\mult(b_t)
\]
for some $2\leq l\leq \kappa_{i,j}$. By Lemma~\ref{lemma:1}, we have $b_s=v_j$ or $b_s=w_{i,j}^{l-1}$. In either case,
\[
b_s\in \{v_j\}\cup\{w_{i,j}^r: 1\leq r\leq \kappa_{i,j}\},
\]
a contradiction.
\end{enumerate}
\end{proof}

\begin{definition}
A \emph{chain digraph} is a pair \((\kappa,G)\), where \(G=(V,E)\) is a digraph and $\kappa:E\rightarrow \N_{\geq 2}$ is a function. 
We denote its value on the edge $(v_i,v_j)\in E$ by $\kappa_{i,j}$ and its maximum value by $k$. 
\end{definition}

\begin{definition}
Given a chain digraph $(\kappa,G)$, 
its associated Lie algebra $(\kappa,G)$ is the quotient Lie algebra
\[
\g(\kappa,G)=\g(k,G)/ I_\kappa,
\]
where $I_\kappa$ is the ideal of $\g(k,G)$ given by
\[
I_\kappa=\left(\bigcap_{\substack{(v_i,v_j)\in E}} I_{i,j}^{\kappa_{i,j}}\right) \cap \left(V_3\oplus\dots\oplus V_k\right).
\]
\end{definition}

\begin{proposition}\label{prop:bracket}
Given a chain digraph $(\kappa,G)$, the Lie algebra $\g(\kappa,G)$ is $k$-step nilpotent. 
A basis for $\g(\kappa,G)$ is
\[
B_1\cup \bigcup_{l=2}^{k} B_l^\kappa,
\]
where $B_1=\{v_1,\dots, v_m\}$ and
\[
B_l^\kappa= \big\{w_{i,j}^l: (v_i,v_j)\in E \text{ and } l\leq \kappa_{i,j} \big\}.
\]

The Lie bracket of $\g(\kappa,G)$ is determined by the nonzero brackets
\[
[w_{i,j}^l, v_j]=w_{i,j}^{l+1}, \qquad \text{if } (v_i,v_j)\in E \text{ and } 1\leq l\leq \kappa_{i,j}-1.
\]
Moreover, $\g(\kappa,G)$ is a naturally graded Lie algebra, and a Carnot grading is
\[
\g(\kappa,G)=\langle B_1\rangle \oplus \langle B_2^{\kappa}\rangle \oplus\dots\oplus\langle B_k^{\kappa}\rangle .
\]
\end{proposition}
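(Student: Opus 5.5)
We first identify the ideal $I_\kappa$ explicitly in terms of the adapted Wade basis $B$. Each $I_{i,j}^{\kappa_{i,j}}$ is spanned by a subset of $B$, so the intersection of these ideals is spanned by the intersection of the corresponding subsets. The same holds for $V_3\oplus\dots\oplus V_k$, which is spanned by $B_3\cup\dots\cup B_k$. Hence $I_\kappa$ is spanned by the elements of $B$ of length at least $3$ that are not of the form $w_{i,j}^l$ with $(v_i,v_j)\in E$ and $l\le\kappa_{i,j}$. (The vertices $v_j$ have length $1$ and are excluded automatically.)

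It follows that $\g(\kappa,G)$ has as a basis the classes of $B_1\cup B_2$ together with the classes of the $w_{i,j}^l$ with $3\le l\le\kappa_{i,j}$. We then check that $B_2$ coincides with $\{w_{i,j}^2:(v_i,v_j)\in E\}$. Indeed, $V_2$ has a basis indexed by the edges of $G$, namely one bracket $[v_i,v_j]$ for each edge. In the adapted basis this bracket is $w_{i,j}^2$ for the chosen orientation $(v_i,v_j)$. Since $\kappa_{i,j}\ge2$, we obtain $B_2=B_2^\kappa$, and so the basis is $B_1\cup\bigcup_{l\ge2}B_l^\kappa$. Intersecting with $V_3\oplus\dots\oplus V_k$ is exactly what keeps all of $V_2$ and guarantees that $\kappa\ge 2$ suffices.

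Next we compute brackets of basis elements in the quotient using the multidegree. Take two basis elements $b_s,b_t$ in the surviving set. Then $[b_s,b_t]$ is a combination of basis elements $b_r$ with $\mult(b_r)=\mult(b_s)+\mult(b_t)$, and modulo $I_\kappa$ only surviving $b_r$ remain. The surviving elements of length at least $2$ are the $w_{i,j}^l$. By Lemma~\ref{lemma:1}, and by Remark~\ref{rmk:1} for uniqueness of the multidegree, a surviving term can only arise when $\{b_s,b_t\}=\{w_{i,j}^{l-1},v_j\}$. Moreover, $[w_{i,j}^{l-1},v_j]=w_{i,j}^l$ holds by definition in $\g(k,G)$, hence also in the quotient whenever $l\le\kappa_{i,j}$. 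For $l>\kappa_{i,j}$ the element $w_{i,j}^l$ lies in $I_\kappa$, so the bracket is zero. All other brackets of basis elements, for instance $[w_{i,j}^l,w_{i',j'}^{l'}]$ with both lengths at least $2$, produce only basis elements that cannot equal a surviving $w$, by Lemma~\ref{lemma:1}, and therefore vanish in the quotient.

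For the grading, the subspaces $\langle B_l^\kappa\rangle$ are images of the $V_l$, so $[\langle B_a\rangle,\langle B_b\rangle]\subseteq\langle B_{a+b}\rangle$. The bracket formula shows $B_{l+1}^\kappa=[B_1,B_l^\kappa]$, since every $w_{i,j}^{l+1}$ equals $[w_{i,j}^l,v_j]$. By induction, $\g(\kappa,G)^i=\bigoplus_{l\ge i}\langle B_l^\kappa\rangle$, which is a Carnot grading. Finally, $B_k^\kappa\neq\emptyset$ because $k$ is attained by some edge, so the algebra is exactly $k$-step nilpotent.

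The main obstacle is the first step: justifying that $B$ restricted to $V_2$ is precisely $\{w_{i,j}^2\}$, and that the intersection of the coordinate subspaces is coordinate. The latter is immediate, since all the subspaces involved are spanned by subsets of the same basis $B$.
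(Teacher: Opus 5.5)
Your proof is correct. The paper states this proposition without a written proof, but your argument is exactly the one its preceding results (the coordinate description of the ideals $I_{i,j}^{\kappa_{i,j}}$, Remark~\ref{rmk:1} and Lemma~\ref{lemma:1}) are set up to support. One side remark should be fixed: intersecting with $V_3\oplus\dots\oplus V_k$ is not what keeps $V_2$, which survives anyway because every element of $B_2$ is some $w_{i,j}^2$ with $\kappa_{i,j}\ge 2$; what the intersection actually protects are the isolated vertices, which lie in every $I_{i,j}^{\kappa_{i,j}}$ and would otherwise be killed.
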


From now on, given a chain digraph \((\kappa,G)\), the basis given in the previous proposition will be called \emph{standard basis of \(\g(\kappa,G)\)}. The \emph{basis elements corresponding to the edge \((v_i,v_j)\)} are \(v_i, v_j\) and \(w_{i,j}^l\), for \(2\leq l\leq\kappa_{i,j}\). 

\begin{corollary}\label{coro:diff}
It holds that
\[
d(v_i^*)=0 \quad\text{and}\quad d({w_{i,j}^l}^*)={w_{i,j}^{l-1}}^* \wedge v_j^*, \text{ for all $2\le l\le \kappa_{i,j}$}.
\]
\end{corollary}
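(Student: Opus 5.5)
The plan is to read off the Chevalley--Eilenberg differential $d^1$ from the bracket table of Proposition~\ref{prop:bracket}, using the standard basis $B_1\cup\bigcup_{l\ge 2}B_l^\kappa$ and its dual basis. Recall that $d^1$ is the dual of the bracket. Hence, for any dual basis element $\xi^*$, we have
\[
d^1(\xi^*)(x,y)=-\xi^*([x,y])
\]
up to the sign convention fixed in Section~\ref{subsec:sym}. We fix the convention so that it agrees with the 2-step case: there $d^1([v_i,v_j]^*)=v_i^*\wedge v_j^*$, so $d^1(\xi^*)(x,y)=\xi^*([x,y])$ with $(a^*\wedge b^*)(x,y)=a^*(x)b^*(y)-a^*(y)b^*(x)$. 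Consequently, $d(\xi^*)=\sum c\, a^*\wedge b^*$, where the sum runs over the unordered pairs $\{a,b\}$ of basis elements with $[a,b]=c\,\xi+\dots$.

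First, for $\xi=v_i\in B_1$: by Proposition~\ref{prop:bracket}, every nonzero bracket of basis elements has the form $[w_{i,j}^l,v_j]=w_{i,j}^{l+1}$, which lies in $\langle B_{l+1}^\kappa\rangle$ with $l+1\ge 2$. So $v_i^*$ vanishes on $[\g,\g]$, and therefore $d(v_i^*)=0$. Equivalently, this follows from the Carnot grading, since $[\g,\g]=\g^2=\bigoplus_{l\ge2}\langle B_l^\kappa\rangle$.

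Second, for $\xi=w_{i,j}^l$ with $2\le l\le\kappa_{i,j}$, we need all pairs of basis elements whose bracket has a nonzero $w_{i,j}^l$-component. By Proposition~\ref{prop:bracket}, the only nonzero brackets are $[w_{a,b}^{r},v_b]=w_{a,b}^{r+1}$. We must have $w_{a,b}^{r+1}=w_{i,j}^l$, so we need to check that these are distinct basis elements for distinct $(a,b,r)$. This follows from Remark~\ref{rmk:1}: the multidegree of $w_{a,b}^{r+1}$ determines $(a,b,r)$. The only subtlety is $l=2$, where the multidegree $e_a+e_b$ is symmetric. Since $G$ is a digraph, however, $(v_b,v_a)\notin E$ whenever $(v_a,v_b)\in E$, so only one of $w_{a,b}^2$ and $w_{b,a}^2$ exists in the standard basis. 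Hence the unique contributing pair is $(w_{i,j}^{l-1},v_j)$ with coefficient $1$ (here $w_{i,j}^1=v_i$). This gives
\[
d({w_{i,j}^l}^*)={w_{i,j}^{l-1}}^*\wedge v_j^*.
\]
We should also note that the bracket $[v_j,w_{i,j}^{l-1}]=-w_{i,j}^l$ is the same unordered pair, so it is already accounted for by antisymmetry.

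The main obstacle is the bookkeeping of the sign convention together with the uniqueness check for $l=2$. The sign convention is fixed by matching the stated 2-step formula. The uniqueness check is handled by the digraph condition combined with Remark~\ref{rmk:1}.
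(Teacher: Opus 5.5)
Your proposal is correct and follows the paper's route. The paper gives no separate proof: it states the corollary as an immediate dualization of the bracket table in Proposition~\ref{prop:bracket}, with $d^1$ the plain transpose of the bracket, which matches $d^1([v_i,v_j]^*)=v_i^*\wedge v_j^*$. Your extra uniqueness check, via Remark~\ref{rmk:1} and the digraph condition for $l=2$, just makes explicit something the paper takes for granted from the basis description: the elements $w_{a,b}^{r}$ are distinct basis vectors, so $(w_{i,j}^{l-1},v_j)$ is the only contributing pair.
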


\begin{remark}
If $\kappa$ is constantly equal to 2, then $\g(\kappa,G)$ is $2$-step nilpotent and isomorphic to the graph Lie algebra $\g(2,G)$ associated 
with the underlying simple graph $G$.

The vector space
\[
\Omega=\bigoplus_{l=3}^k \left\langle\{w_{i,j}^l: (v_i,v_j)\in E \text{ and } l\leq \kappa_{i,j} \}\right\rangle
\]
is an abelian ideal of $\g(\kappa,G)$, and the quotient $\g(\kappa,G)/\Omega$ is isomorphic to $\g(2,G)$.
\end{remark}

The Lie algebra $\g(\kappa,G)$ can be represented 
by adding to the digraph $G$ a chain of vertices of length $\kappa_{i,j}-2$ to each edge $(v_i,v_j)$.

\begin{example}
Consider the digraph $G$ presented in Section~\ref{sec:2-gr} and the chain function $\kappa$ that takes the value $2$ on every edge except $(v_4,v_2)$ and $(v_3,v_1)$, where
\[
\kappa((v_4,v_2))=6
\qquad\text{and}\qquad
\kappa((v_3,v_1))=4.
\]
The corresponding 17-dimensional $6$-step nilpotent Lie algebra $\g(\kappa,G)$ is represented by
\begin{center}
\begin{tikzpicture}
\tikzset{every loop/.style={looseness=30}}
\node (1) at (0,0) [draw,circle,inner sep=1.2pt,fill=black!100,label=90:{$v_1$}] {};
\node (2) at (-1,-1)  [draw,circle,inner sep=1.2pt,fill=black!100,label=160:{$v_2$}] {};
\node (3) at (1,-1)  [draw,circle,inner sep=1.2pt,fill=black!100,label=0:{$v_3$}] {};
\node (4) at (-1,-2)  [draw,circle,inner sep=1.2pt,fill=black!100,label=270:{$v_4$}] {};
\node (5) at (0.25,-2)  [draw,circle,inner sep=1.2pt,fill=black!100,label=270:{$v_5$}] {};
\node (6) at (1.75,-2)  [draw,circle,inner sep=1.2pt,fill=black!100,label=0:{$v_6$}] {};
\node (7) at (-1.5,-1.6) [draw,circle,inner sep=1.2pt, label=90:{\tiny{$w_{4,2}^3$}}] {};
\node (8) at (-2.1,-1.6) [draw,circle,inner sep=1.2pt, label=90:{\tiny{$w_{4,2}^4$}}] {};
\node (9) at (-2.7,-1.6) [draw,circle,inner sep=1.2pt, label=90:{\tiny{$w_{4,2}^5$}}] {};
\node (10) at (-3.3,-1.6) [draw,circle,inner sep=1.2pt, label=90:{\tiny{$w_{4,2}^6$}}] {};
\node (11) at (0.8,-0.25) [draw,circle,inner sep=1.2pt, label=90:{\tiny{$w_{3,1}^3$}}] {};
\node (12) at (1.3,0) [draw,circle,inner sep=1.2pt, label=90:{\tiny{$w_{3,1}^4$}}] {};
		
\path
(1)  edge [<-, black] node[above=0] {} (2)
(1)  edge [<-, black] node[above=0] {} (3)
(2)  edge [<-, black] node[above=0] {} (4)
(3)  edge [<-, black] node[above=0] {} (5)
(3)  edge [<-, black] node[above=0] {} (6);
\end{tikzpicture}
\end{center}

The only non-zero basis brackets involving $w_{4,2}^i$ and $w_{3,1}^i$ are:
\[ [w_{4,2}^1,v_2]=w_{4,2}^2, \quad [w_{4,2}^2,v_2]=w_{4,2}^3, \quad \dots, \quad [w_{4,2}^5,v_2]=w_{4,2}^6; \]
\[ [w_{3,1}^1,v_1]=w_{3,1}^2, \quad [w_{3,1}^2,v_1]=w_{3,1}^3, \quad [w_{3,1}^3,v_1]=w_{3,1}^4. \]
Recall that $w_{4,2}^1=v_4$, $w_{4,2}^2=[v_4,v_2]$ and that $w_{3,1}^1=v_3$, $w_{3,1}^2=[v_3,v_1]$.

The other non-zero brackets are the usual ones (corresponding to the remaining edges)
\[[w_{2,1}^1,v_1]=w_{2,1}^2, \quad [w_{5,3}^1,v_3]=w_{5,3}^2, \quad [w_{6,3}^1,v_3]=w_{6,3}^2. \]
\end{example}

%==================================================
\section{Families of symplectic $k$-step nilpotent Lie algebras }\label{sec:fam-sym}

In this section we identify conditions on a digraph $G$ with chain function $\kappa$, such that $\g(\kappa,G)$ is symplectic. 
Moreover, for each \emph{chain digraph} $(\kappa,G)$ in this family we exhibit an explicit symplectic form of $\g(\kappa,G)$.

\begin{theorem}
Let \((\kappa,G)\) be a chain digraph. If \(\g(\kappa,G)\) is symplectic, then \(\dim(\g(\kappa,G))\) is even and \(G\) has no more edges than vertices.
\end{theorem}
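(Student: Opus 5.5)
The dimension claim needs no argument: a symplectic form exists only on an even-dimensional space, since $\omega^n\neq 0$ with $\dim\g=2n$. For the inequality $|E|\le |V|$, I would use the standard fact that the center of a symplectic Lie algebra is $\omega$-orthogonal to the derived algebra. Then I would count dimensions, using the explicit standard basis of $\g(\kappa,G)$ from Proposition~\ref{prop:bracket}.

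\emph{Step 1 (closedness forces orthogonality).} For any closed $2$-form $\omega$ and any $x,y,z\in\g$, the condition $d\omega=0$ is equivalent to the cyclic identity
\[
\omega([x,y],z)+\omega([y,z],x)+\omega([z,x],y)=0 .
\]
If $z$ is central, the last two terms vanish, so $\omega([x,y],z)=0$. Hence $\omega(\g^2,Z(\g))=0$, where $\g^2=[\g,\g]$ and $Z(\g)$ is the center.

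\emph{Step 2 (the top elements).} For each edge $e=(v_i,v_j)\in E$, let $t_e=w_{i,j}^{\kappa_{i,j}}$ be the top element of its chain. By Proposition~\ref{prop:bracket}, the only nonzero basis brackets are $[w_{i,j}^l,v_j]=w_{i,j}^{l+1}$ for $l\le \kappa_{i,j}-1$. So $t_e$ brackets trivially with every basis element, and therefore $t_e\in Z(\g)$. Since $\kappa_{i,j}\ge 2$, each $t_e$ is a bracket, so $t_e\in\g^2$. The $t_e$ are distinct standard basis vectors, so $T=\langle t_e: e\in E\rangle$ has dimension $|E|$.

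From the bracket description, $\g^2=\langle w_{i,j}^l : 2\le l\le\kappa_{i,j}\rangle$. Its dimension is $\sum_{e\in E}(\kappa_e-1)$, while $\dim\g=|V|+\sum_{e}(\kappa_e-1)$.

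\emph{Step 3 (counting).} By Step 1 and $T\subseteq Z(\g)$, we have $\g^2\subseteq T^{\perp_\omega}$. Because $\omega$ is nondegenerate, $\dim T^{\perp_\omega}=\dim\g-\dim T$. Therefore
\[
\sum_{e}(\kappa_e-1)\;\le\; |V|+\sum_e(\kappa_e-1)-|E|,
\]
which gives $|E|\le|V|$.

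The only point needing care is Step 2: one must confirm from Proposition~\ref{prop:bracket} that the top elements really are central and span a subspace of dimension exactly $|E|$, and that $\g^2$ is exactly the span of the $w^l$ with $l\ge 2$. Both follow directly from the stated bracket relations. Applying the same argument to each connected-component summand would give the componentwise bound, but that is not needed for the global statement.
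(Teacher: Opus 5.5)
Your proof is correct and is essentially the paper's argument: the paper invokes the well-known inequality $\dim Z(\g)\le\dim(\g/\g')$ for Lie algebras with a nondegenerate closed $2$-form, notes that the top elements $w_{i,j}^{\kappa_{i,j}}$ span an $|E|$-dimensional subspace of the center, and uses $\dim(\g/\g')=|V|$. You do the same count, except that you prove the needed inequality inline from $\omega(\g^2,Z(\g))=0$ and nondegeneracy rather than citing it.
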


\begin{proof}
It is well known that any Lie algebra \(\g\) with a nondegenerated closed \(2\)-form must be even dimensional and must satisfy \(\dim(Z(\g))\leq \dim(\g/ \g')\). 
In our case, the set \(S=\{w_{i,j}^{\kappa_{i,j}}: (v_i,v_j)\in E\}\) is contained in \(Z(\g)\) and \(\dim(\langle S\rangle)=|E|\). Also, \(\dim(\g/ \g')=|V|\).

Therefore, \(|E|\leq |V|\) and the dimension of \(\g(\kappa,G)\) is even.
\end{proof}

\begin{remark}\label{rmk:graphs}
Note that if a graph $G$ has no more edges than vertices, each connected component has at most one cycle. 
\end{remark}

\begin{theorem}\label{thm:even}
	Let \((\kappa,G)\) be a chain digraph and let $(v_i,v_j)$ be an oriented edge.
    Then, if $\kappa_{i,j}$ is even, the \(2\)-form 
    \begin{eqnarray}
    	 \sigma_{i,j} &=& \sum_{l=1}^{\frac{\kappa_{i,j}}2} (-1)^{l+1} \,{w_{i,j}^{l}}^* \wedge {w_{i,j}^{(\kappa_{i,j}+1-l)}}^* 
                                                                                                                   \label{eqn:sigma-even}\\ 
    	                &=&  v_i^* \wedge {w_{i,j}^{\kappa_{i,j}}}^* + 
    	                 \sum_{l=2}^{\frac{\kappa_{i,j}}2} (-1)^{l+1}\,{w_{i,j}^{l}}^* \wedge {w_{i,j}^{(\kappa_{i,j}+1-l)}}^* \nonumber
    \end{eqnarray}                    
    is closed on the Lie algebra \(\g(\kappa,G)\).
\end{theorem}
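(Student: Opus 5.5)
Write $\kappa=\kappa_{i,j}$ and $w^l=w_{i,j}^l$, so that $w^1=v_i$. The plan is a direct computation of $d\sigma_{i,j}$ using only Corollary~\ref{coro:diff}, which gives $d({w^1}^*)=d(v_i^*)=0$ and $d({w^l}^*)={w^{l-1}}^*\wedge v_j^*$ for $2\le l\le\kappa$. The Leibniz rule for one-forms reads
\[
d(\alpha\wedge\beta)=d\alpha\wedge\beta-\alpha\wedge d\beta .
\]
Applying it to the summand of index $l$ gives
\[
d\big({w^l}^*\wedge{w^{\kappa+1-l}}^*\big)={w^{l-1}}^*\wedge v_j^*\wedge{w^{\kappa+1-l}}^*-{w^l}^*\wedge{w^{\kappa-l}}^*\wedge v_j^*.
\]
For $l=1$ the first term is absent. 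This is the only place where the oriented edge enters: the chain $w^1,\dots,w^\kappa$ is fed only by $v_j$.

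Next, move $v_j^*$ to the right in each term. This turns the first term into $-{w^{l-1}}^*\wedge{w^{\kappa+1-l}}^*\wedge v_j^*$. Every term then has the form $\pm\,{w^a}^*\wedge{w^b}^*\wedge v_j^*$ with $a+b=\kappa$. Set
\[
\tau_a={w^a}^*\wedge{w^{\kappa-a}}^*\wedge v_j^*,\qquad 1\le a\le \kappa-1,
\]
so that $\tau_{\kappa-a}=-\tau_a$. Then
\[
d\sigma_{i,j}=\sum_{l=1}^{\kappa/2}(-1)^{l+1}\big(-\tau_{l-1}-\tau_l\big),\qquad \tau_0:=0.
\]
Reindexing, this telescopes in the alternating signs: $\tau_a$ for $1\le a\le \kappa/2-1$ appears once from index $l=a$ with sign $-(-1)^{a+1}$, and once from index $l=a+1$ with sign $-(-1)^{a+2}$. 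These cancel.

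The one remaining term is the middle term $\tau_{\kappa/2}$, coming from $l=\kappa/2$. This is where evenness of $\kappa$ is used: $\tau_{\kappa/2}={w^{\kappa/2}}^*\wedge{w^{\kappa/2}}^*\wedge v_j^*=0$. Hence $d\sigma_{i,j}=0$.

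The main obstacle is purely bookkeeping: the signs from reordering the wedge factors, and the correct handling of the endpoint $l=1$, where $d(v_i^*)=0$. I would check the cases $\kappa=2$ (where $\sigma_{i,j}=v_i^*\wedge [v_i,v_j]^*$, closed as in \eqref{eqn:sigma-ij}) and $\kappa=4$ explicitly to confirm the sign pattern before writing the general telescoping. No other basis elements are involved, since by Corollary~\ref{coro:diff} the differentials of the chain duals involve only the chain duals and $v_j^*$.
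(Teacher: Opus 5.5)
Your computation is correct and is essentially the paper's proof. Both apply Corollary~\ref{coro:diff} with the Leibniz rule, let the alternating signs telescope, and use that $\kappa_{i,j}$ is even so the one surviving middle term contains ${w_{i,j}^{\kappa_{i,j}/2}}^*$ twice and vanishes; the only cosmetic difference is that the paper isolates the $l=1$ term $A=v_i^*\wedge v_j^*\wedge{w_{i,j}^{\kappa_{i,j}-1}}^*$ and shows the remaining sum equals $-A$, whereas you absorb $l=1$ uniformly via $\tau_0:=0$.
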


\begin{proof} 
First, we have 
\begin{eqnarray*}
    	 d(\sigma_{i,j}) &=& d\left(\sum_{l=1}^{\frac{\kappa_{i,j}}2} (-1)^{l+1} {w_{i,j}^{l}}^* \wedge {w_{i,j}^{(\kappa_{i,j}+1-l)}}^*\right) \\ 
    	                &=& \underbrace{ d\left(v_i^* \wedge {w_{i,j}^{\kappa_{i,j}}}^*\right)}_A + 
    	                 \underbrace{\sum_{l=2}^{\frac{\kappa_{i,j}}2} (-1)^{l+1} \, d\left({w_{i,j}^{l}}^* \wedge {w_{i,j}^{(\kappa_{i,j}+1-l)}}^*\right)}_{B}
\end{eqnarray*}
On the one hand, from Corollary, \ref{coro:diff} it follows that
\[ 
A = -v_i^* \wedge {w_{i,j}^{(\kappa_{i,j}-1)}}^*\wedge  v_j^* = v_i^* \wedge v_j^* \wedge {w_{i,j}^{(\kappa_{i,j}-1)}}^*.
\]
On the other hand, also from Corollary \ref{coro:diff}, it follows that
\begin{eqnarray*}
                         B&=&\sum_{l=2}^{\frac{\kappa_{i,j}}2} (-1)^{l+1} \, \left( d\left({w_{i,j}^{l}}^*\right) \wedge {w_{i,j}^{(\kappa_{i,j}+1-l)}}^*- {w_{i,j}^{l}}^* \wedge d\left({w_{i,j}^{(\kappa_{i,j}+1-l)}}^*\right)\right)\\
                         &=& \sum_{l=2}^{\frac{\kappa_{i,j}}2} (-1)^{l+1} \left(  {w_{i,j}^{l-1}}^* \wedge  v_j^*\wedge {w_{i,j}^{(\kappa_{i,j}+1-l)}}^*- {w_{i,j}^{l}}^* \wedge {w_{i,j}^{(\kappa_{i,j}+1-l-1)}}^*\wedge  v_j^*\right)\\
                            &=& \sum_{l=2}^{\frac{\kappa_{i,j}}2} (-1)^{l+1}\,  {w_{i,j}^{l-1}}^* \wedge  v_j^*\wedge {w_{i,j}^{(\kappa_{i,j}+1-l)}}^*+ \sum_{l=2}^{\frac{\kappa_{i,j}}2} (-1)^{l+1}\, {w_{i,j}^{l}}^* \wedge  v_j^* \wedge  {w_{i,j}^{(\kappa_{i,j}-l)}}^* \\
                         &=& - v_i^* \wedge v_j^* \wedge {w_{i,j}^{(\kappa_{i,j}-1)}}^*.
\end{eqnarray*}  
Therefore, the \(2\)-form \(\sigma_{i,j}\) is closed on the Lie algebra \(\g(\kappa,G)\).
    \end{proof}

Observe that if \(\kappa_{i,j}=2\), then \(\sigma_{i,j}\) coincides with the \(2\)-form \(\sigma_{i,j}\) defined for the \(2\)-step graph Lie algebras in (\ref{eqn:sigma-ij}).  
   
\begin{theorem}\label{thm:odd}
	Let \((\kappa,G)\) be a chain digraph and $(v_i,v_j)$ be an oriented edge.
    Then, if $\kappa_{i,j}$ is odd, $\kappa'_{i,j}=\kappa_{i,j}-1$ is even and the \(2\)-form
    \begin{equation}\label{eqn:sigma-odd}
    	\sigma_{i,j} = v_j^* \wedge {w_{i,j}^{\kappa_{i,j}}}^* + \sum_{l=1}^{\frac{\kappa'_{i,j}}2} (-1)^{l+1}\,{w_{i,j}^{l}}^* \wedge {w_{i,j}^{(\kappa'_{i,j}+1-l)}}^*
    \end{equation}                  
    is closed on the Lie algebra \(\g(\kappa,G)\).
\end{theorem}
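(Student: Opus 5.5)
The plan is to split \(\sigma_{i,j}\) into the extra term \(v_j^*\wedge {w_{i,j}^{\kappa_{i,j}}}^*\) and the remaining sum
\[
\tau=\sum_{l=1}^{\frac{\kappa'_{i,j}}2} (-1)^{l+1}\,{w_{i,j}^{l}}^* \wedge {w_{i,j}^{(\kappa'_{i,j}+1-l)}}^*,
\]
and show that each piece is closed separately, using only Corollary~\ref{coro:diff}.

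For the extra term, \(d(v_j^*)=0\), and Corollary~\ref{coro:diff} gives \(d({w_{i,j}^{\kappa_{i,j}}}^*)={w_{i,j}^{\kappa_{i,j}-1}}^*\wedge v_j^*\). Hence
\[
d\big(v_j^*\wedge {w_{i,j}^{\kappa_{i,j}}}^*\big)=-v_j^*\wedge {w_{i,j}^{\kappa_{i,j}-1}}^*\wedge v_j^*=0,
\]
because \(v_j^*\) appears twice. This step is immediate.

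For \(\tau\), note that it is formally the form \eqref{eqn:sigma-even} with \(\kappa_{i,j}\) replaced by the even number \(\kappa'_{i,j}\). I would rerun the computation in the proof of Theorem~\ref{thm:even} with \(\kappa'_{i,j}\) in place of \(\kappa_{i,j}\). That computation only uses the differentials \(d({w_{i,j}^{l}}^*)={w_{i,j}^{l-1}}^*\wedge v_j^*\) for \(2\le l\le \kappa'_{i,j}\), together with \(d(v_i^*)=0\). These formulas hold in \(\g(\kappa,G)\) by Corollary~\ref{coro:diff}, since \(\kappa'_{i,j}<\kappa_{i,j}\). The fact that \(w_{i,j}^{\kappa'_{i,j}}\) is no longer central is irrelevant, because central-ness was never used; only the formula for \(d\) of the dual elements enters. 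So, exactly as before, the term \(A=d(v_i^*\wedge {w_{i,j}^{\kappa'_{i,j}}}^*)\) equals \(v_i^*\wedge v_j^*\wedge {w_{i,j}^{\kappa'_{i,j}-1}}^*\). The sum \(B\) telescopes, after reindexing \(l\mapsto l+1\) in one of its two halves, to \(-v_i^*\wedge v_j^*\wedge {w_{i,j}^{\kappa'_{i,j}-1}}^*\). Therefore \(d(\tau)=0\).

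The only point needing care is the justification that the even-case argument transfers verbatim, that is, that no differential of an index above \(\kappa'_{i,j}\) enters it. I would also check the degenerate case \(\kappa_{i,j}=3\) directly. There \(\kappa'_{i,j}=2\), so \(B\) is empty and \(\tau=v_i^*\wedge {w_{i,j}^2}^*\). Its differential is \(-v_i^*\wedge v_i^*\wedge v_j^*=0\), so it is closed. Adding the two closed pieces gives \(d(\sigma_{i,j})=0\).
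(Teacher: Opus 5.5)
Your proposal is correct and follows the paper's own proof. The paper also splits off $v_j^*\wedge {w_{i,j}^{\kappa_{i,j}}}^*$, whose differential vanishes because $v_j^*$ appears twice, and treats the remaining sum as the even-case form for $\kappa'_{i,j}$, closed by the computation of Theorem~\ref{thm:even}; your observation that this computation only uses $d({w_{i,j}^{l}}^*)$ for $l\le\kappa'_{i,j}$ (valid in $\g(\kappa,G)$ by Corollary~\ref{coro:diff}), together with your direct check of $\kappa_{i,j}=3$, simply makes explicit what the paper leaves implicit in ``by the previous theorem.''
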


\begin{proof}
We have that
\[
d(\sigma_{i,j}) = d( v_j^* \wedge {w_{i,j}^{\kappa_{i,j}}}^*) + d \left(\sum_{l=1}^{\frac{\kappa'_{i,j}}2}  (-1)^{l+1}\, {w_{i,j}^{l}}^* \wedge {w_{i,j}^{(\kappa'_{i,j}+1-l)}}^*\right).
\]
By the previous theorem, the second summand is zero. 
As for the first one, and by Corollary \ref{coro:diff}, we have that
\[
d( v_j^* \wedge {w_{i,j}^{\kappa_{i,j}}}^*) = -  \,v_j^* \wedge {w_{i,j}^{\kappa_{i,j}-1}}^*\wedge v_j^* = 0.
\]
Therefore, the \(2\)-form \(\sigma_{i,j}\) is closed on the Lie algebra \(\g(\kappa,G)\).
\end{proof}

These forms, associated to the oriented edge \((v_i,v_j)\), may be depicted as follows. 
For the left one, $\kappa_{i,j}$ is even and $\ge 8$, while for the right one, $\kappa_{i,j}$ is odd and $\ge 9$.

\medskip

\begin{center}
	\begin{tikzpicture}
		\tikzset{every loop/.style={looseness=-10}}
		\node (1) at (0,0) [draw,circle,inner sep=1pt,fill=black,label=-90:{$v_j$}] {};
		\node (2) at (2,0) [draw,circle,inner sep=1pt,fill=black,label=-90:{$v_i$}] {};
		\node (3) at (1,0.3) [draw,circle,inner sep=1.2pt,label=0:{}] {};
		\node (4) at (1,0.8) [draw,circle,inner sep=1.2pt,label=0:{}] {};
		\node (5) at (1,1.5) [draw,circle,inner sep=1.2pt,label=0:{}] {};
		\node (6) at (1,2) [draw,circle,inner sep=1.2pt,label=180:{}] {};
        \node (7) at (1,2.5) [draw,circle,inner sep=1.2pt,label=0:{}] {};
		\node (8) at (1,3) [draw,circle,inner sep=1.2pt,label=180:{}] {};
		
		\path
		(1) edge [-, black]  (2)
        (2) edge [->, black] (1,0)
		(4) edge [dash pattern=on 0.5pt off 3pt, line cap=round] (5);
		
		\coordinate (mid_v1_v2) at ($ (1)!0.25!(2) $);  % 3/4 arista v1-v2
		
		%Flechas
		\draw[->,thick,bend right=30, red] (2) to (8);    
		\draw[->,thick,bend right=30,red] (7) to (mid_v1_v2);  
		\draw[->,thick,bend right=50,red] (3) to (6);
        \draw[->,thick,bend right=50,red] (5) to (4);
	\end{tikzpicture}  
	\qquad\qquad
   \begin{tikzpicture}
		\tikzset{every loop/.style={looseness=-10}}
		\node (1) at (0,0) [draw,circle,inner sep=1pt,fill=black,label=-90:{$v_j$}] {};
		\node (2) at (2,0) [draw,circle,inner sep=1pt,fill=black,label=-90:{$v_i$}] {};
		\node (3) at (1,0.3) [draw,circle,inner sep=1.2pt,label=0:{}] {};
		\node (4) at (1,0.8) [draw,circle,inner sep=1.2pt,label=0:{}] {};
		\node (5) at (1,1.5) [draw,circle,inner sep=1.2pt,label=0:{}] {};
		\node (6) at (1,2) [draw,circle,inner sep=1.2pt,label=180:{}] {};
        \node (7) at (1,2.5) [draw,circle,inner sep=1.2pt,label=0:{}] {};
		\node (8) at (1,3) [draw,circle,inner sep=1.2pt,label=180:{}] {};
        \node (9) at (1,3.5) [draw,circle,inner sep=1.2pt,label=180:{}] {};
		
		\path
		(1) edge [-, black]  (2)
        (2) edge [->, black] (1,0)
		(4) edge [dash pattern=on 0.5pt off 3pt, line cap=round] (5);
		
		\coordinate (mid_v1_v2) at ($ (1)!0.25!(2) $);  % 3/4 arista v1-v2
		
		%Flechas
		\draw[->,thick,bend right=30, red] (2) to (8);    
		\draw[->,thick,bend right=30,red] (7) to (mid_v1_v2);  
		\draw[->,thick,bend right=50,red] (3) to (6);
        \draw[->,thick,bend right=50,red] (5) to (4);
        \draw[->,thick,bend left=30,red] (1) to (9);
	\end{tikzpicture}
\end{center}

\begin{remark}\label{rmk:sigmaij}
Let $(v_i,v_j)$ be an oriented edge.
If $\kappa_{i,j}$ is even, all basis elements corresponding to the edge \((v_i,v_j)\), but the head $v_j$, appear once in $\sigma_{i,j}$. 
While if $\kappa_{i,j}$ is odd, all basis elements corresponding to the edge \((v_i,v_j)\) appear once in $\sigma_{i,j}$. 
\end{remark}

\medskip

From now on, we restrict ourselves to consider three disjoint families of chain digraphs.
\begin{itemize}
    \item $\F_1$. Forests consisting of two rooted trees, directed from the leaves to the root, with an even chain function
                ($\kappa_{i,j}$ is even for all directed edges $(v_i,v_j)$).
    \item $\F_2$. Unicyclic graphs, directed centripetally, with an even chain function.
    \item $\F_3$. Rooted trees, directed from the leaves to the root, with an almost even chain function
                 ($\kappa_{i,j}$ is even on all directed edges $(v_i,v_j)$, but one ending in the root where it is odd).
\end{itemize}

\medskip

Let \(\F\) be the family of chain digraphs that are unions of chain digraphs taken from \(\F_1\cup\F_2\cup\F_3\). Since \(\g(\kappa,G)\) decomposes as a direct sum over the connected components of \(G\), and since the direct sum of symplectic Lie algebras is symplectic, it is enough to prove that \(\g(\kappa,G)\) is symplectic for every chain digraph in each of the families \(\F_1\), \(\F_2\), and \(\F_3\).

\medskip

Since the sum of closed 2-forms is closed, for every given digraph $(\kappa,G)$ with $G=(V,E)$,
the 2-form
\begin{equation}\label{eqn:sigmaE}
\sigma_E = \sum_{(v_i,v_j)\in E} \sigma_{i,j},
\end{equation}
is closed. Here, $\sigma_{i,j}$ is the 2-form defined in \eqref{eqn:sigma-even} or in \eqref{eqn:sigma-odd}
depending on the parity of $\kappa_{i,j}$.

%----------------------------------------------------------------------------------
\subsection{The family $\F_1$}

\begin{theorem}
Let $(\kappa,G)$ be a chain digraph of the family $\F_1$. 
Then, the corresponding Lie algebra $\g(\kappa,G)$ is symplectic.
Moreover, if $v_1$ and $v_2$ are the roots of the two trees forming $G$,
the 2-form
\[ \sigma_G= \sigma_E + v_1^*\wedge v_2^*, \]
where $\sigma_E$ is as in \eqref{eqn:sigmaE}, is a symplectic form on $\g(\kappa,G)$.
\end{theorem}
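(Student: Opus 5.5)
Since $\sigma_E$ is closed by Theorem~\ref{thm:even} and \eqref{eqn:sigmaE}, and $v_1^*\wedge v_2^*$ is closed because $d(v_i^*)=0$ by Corollary~\ref{coro:diff}, the form $\sigma_G$ is closed. It therefore suffices to prove that $\sigma_G$ is nondegenerate. I will do this with Remark~\ref{rmk:sym1}: I will show that $\sigma_G$ is a sum of terms $\pm\, x^*\wedge y^*$ with coefficients $\pm1$, in which each element of the standard basis of $\g(\kappa,G)$ occurs in exactly one term. Then $\sigma_G^n\neq 0$ follows directly.

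The key step is a counting argument on the standard basis. By Proposition~\ref{prop:bracket}, the basis is $B_1\cup\bigcup_{l\ge 2}B_l^\kappa$. Grouping by edges, it is the disjoint union of the vertices $V$ and, for each oriented edge $(v_i,v_j)\in E$, the chain elements $w_{i,j}^l$ with $2\le l\le\kappa_{i,j}$. Fix an edge $(v_i,v_j)$; since $\kappa_{i,j}$ is even, $\sigma_{i,j}$ is given by \eqref{eqn:sigma-even}. The pairs $(l,\kappa_{i,j}+1-l)$ with $1\le l\le\kappa_{i,j}/2$ partition $\{1,\dots,\kappa_{i,j}\}$. Hence $\sigma_{i,j}$ contains each of $v_i=w_{i,j}^1, w_{i,j}^2,\dots,w_{i,j}^{\kappa_{i,j}}$ exactly once, and does not contain $v_j$ (this is Remark~\ref{rmk:sigmaij}). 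The chain elements of different edges are distinct basis vectors, so distinct $\sigma_{i,j}$ involve disjoint sets of chain elements. The only possible overlap between different $\sigma_{i,j}$ is through the tail vertices $v_i$.

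Now I use the orientation from the leaves to the root. In a rooted tree, every non-root vertex is the tail of exactly one edge, namely the first edge of its unique directed path to the root, and the root is the tail of no edge. Consequently, in $\sigma_E$ every vertex of $G$ except the two roots $v_1,v_2$ appears exactly once, every chain element appears exactly once, and $v_1,v_2$ do not appear. The extra term $v_1^*\wedge v_2^*$ pairs the two roots. Thus every basis element appears in exactly one term of $\sigma_G$. This also gives $\dim\g(\kappa,G)=2n$, where $n$ is the number of terms. Remark~\ref{rmk:sym1} then yields $\sigma_G^n\neq0$, so $\sigma_G$ is symplectic.

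The main point needing care is the bookkeeping: confirming that the $w_{i,j}^{l}$ for distinct edges really are distinct elements of the standard basis (Proposition~\ref{prop:bracket}), and that the tail map $E\to V\setminus\{v_1,v_2\}$ is a bijection for a two-tree forest oriented towards the roots. Both are straightforward, so I expect no real obstacle beyond checking the partition of indices in \eqref{eqn:sigma-even}.
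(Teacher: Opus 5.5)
Your proposal is correct and follows the paper's proof. Closedness comes from Theorem~\ref{thm:even} together with $d(v_i^*)=0$. Nondegeneracy comes from Remark~\ref{rmk:sym1}: by Remark~\ref{rmk:sigmaij}, and because every non-root vertex is the tail of exactly one edge, each standard basis element other than $v_1,v_2$ occurs exactly once in $\sigma_E$, and $v_1^*\wedge v_2^*$ pairs the two roots. You simply spell out the tail-to-edge bijection and the partition of indices in \eqref{eqn:sigma-even}, which the paper's short proof leaves implicit.
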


\begin{proof}
The form $\sigma_G$ is clearly closed.

Since $\kappa$ is even, by Remark \ref{rmk:sigmaij}, all the elements of the standard basis of $\g(\kappa,G)$, but $v_1$ and $v_2$, appear once in $\sigma_E$.
Hence, $\sigma_G$ is nondegenerate (see \S \ref{subsec:sym}).
\end{proof}

\begin{example}   
For the following chain digraph $G$ of $\F_1$,

\medskip

\begin{center}
\begin{tikzpicture}
\tikzset{every loop/.style={looseness=30}}
% Nodos
\node (1) at (0,0) [draw,circle,inner sep=1pt,fill=black!100,label=90:{$v_1$}] {};
\node (2) at (-1,-1) [draw,circle,inner sep=1pt,fill=black!100,label=90:{}] {};
\node (3) at (1,-1) [draw,circle,inner sep=1pt,fill=black!100,label=90:{}] {};
\node (4) at (-1.75,-2) [draw,circle,inner sep=1pt,fill=black!100,label=90:{}] {};
\node (5) at (-0.35,-2)  [draw,circle,inner sep=1pt,fill=black!100,label=180:{}] {};
\node (6) at (0.25,-2)  [draw,circle,inner sep=1pt,fill=black!100,label=180:{}] {};
\node (7) at (1.75,-2)  [draw,circle,inner sep=1pt,fill=black!100,label=0:{}] {};

\node (8) at (4.5,0)  [draw,circle,inner sep=1pt,fill=black!100,label=90:{$v_2$}] {};
\node (9) at (3.5,-1)  [draw,circle,inner sep=1pt,fill=black!100,label=180:{}] {};
\node (10) at (5.5,-1)  [draw,circle,inner sep=1pt,fill=black!100,label=0:{}] {};
\node (11) at (3.5,-2)  [draw,circle,inner sep=1pt,fill=black!100,label=180:{}] {};
\node (12) at (4.75,-2)  [draw,circle,inner sep=1pt,fill=black!100,label=0:{}] {};
\node (13) at (6.25,-2)  [draw,circle,inner sep=1pt,fill=black!100,label=0:{}] {};
% cadenas w_{12}^k
\node (14) at (-0.65,-0.35) [draw,circle,inner sep=1.2pt,label=0:{}] {};
\node (15) at (-1.05,0.05) [draw,circle,inner sep=1.2pt,label=0:{}] {};
\node (16) at (-1.35,0.35) [draw,circle,inner sep=1.2pt,label=0:{}] {};
\node (17) at (-1.75,0.75) [draw,circle,inner sep=1.2pt,label=0:{}] {};
% cadenas w_{37}^k
\node (18) at (1.65,-1.29) [draw,circle,inner sep=1.2pt,label=0:{}] {};
\node (19) at (2,-1.03) [draw,circle,inner sep=1.2pt,label=0:{}] {};
% cadenas w_{1013}^k
\node (20) at (6.25,-1.22) [draw,circle,inner sep=1.2pt,label=0:{}] {};
\node (21) at (6.55,-1) [draw,circle,inner sep=1.2pt,label=0:{}] {};
\node (22) at (6.85,-0.77) [draw,circle,inner sep=1.2pt,label=0:{}] {};
\node (23) at (7.15,-0.54) [draw,circle,inner sep=1.2pt,label=0:{}] {};
% cadenas w_{810}^k
\node (24) at (5.35,-0.15) [draw,circle,inner sep=1.2pt,label=0:{}] {};
\node (25) at (5.65,0.15) [draw,circle,inner sep=1.2pt,label=0:{}] {};
% cadenas w_{911}
\node (26) at (3.75,-1.5) [draw,circle,inner sep=1.2pt,label=0:{}] {};
\node (27) at (4.1,-1.5) [draw,circle,inner sep=1.2pt,label=0:{}] {};
\path
(1)  edge [<-, black] node[above=0] {} (2)
(1)  edge [<-, black] node[above=0] {} (3)
(2)  edge [<-, black] node[above=0] {} (4)
(2)  edge [<-, black] node[above=0] {} (5)
(3)  edge [<-, black] node[above=0] {} (6)
(3)  edge [<-, black] node[above=0] {} (7)
(8)  edge [<-, black] node[above=0] {} (9)
(8)  edge [<-, black] node[above=0] {} (10)
(9)  edge [<-, black] node[above=0] {} (11)
(10)  edge [<-, black] node[above=0] {} (12)
(10)  edge [<-, black] node[above=0] {} (13);
\end{tikzpicture}
\end{center}
the symplectic form $\sigma_G$ can be depicted as 
\begin{center}
\begin{tikzpicture}
\tikzset{every loop/.style={looseness=30}}
% Nodos
\node (1) at (0,0) [draw,circle,inner sep=1pt,fill=black!100,label=90:{$v_1$}] {};
\node (2) at (-1,-1) [draw,circle,inner sep=1pt,fill=black!100,label=90:{}] {};
\node (3) at (1,-1) [draw,circle,inner sep=1pt,fill=black!100,label=90:{}] {};
\node (4) at (-1.75,-2) [draw,circle,inner sep=1pt,fill=black!100,label=90:{}] {};
\node (5) at (-0.35,-2)  [draw,circle,inner sep=1pt,fill=black!100,label=180:{}] {};
\node (6) at (0.25,-2)  [draw,circle,inner sep=1pt,fill=black!100,label=180:{}] {};
\node (7) at (1.75,-2)  [draw,circle,inner sep=1pt,fill=black!100,label=0:{}] {};

\node (8) at (4.5,0)  [draw,circle,inner sep=1pt,fill=black!100,label=90:{$v_2$}] {};
\node (9) at (3.5,-1)  [draw,circle,inner sep=1pt,fill=black!100,label=180:{}] {};
\node (10) at (5.5,-1)  [draw,circle,inner sep=1pt,fill=black!100,label=0:{}] {};
\node (11) at (3.5,-2)  [draw,circle,inner sep=1pt,fill=black!100,label=180:{}] {};
\node (12) at (4.75,-2)  [draw,circle,inner sep=1pt,fill=black!100,label=0:{}] {};
\node (13) at (6.25,-2)  [draw,circle,inner sep=1pt,fill=black!100,label=0:{}] {};
% cadenas w_{12}^k
\node (14) at (-0.65,-0.35) [draw,circle,inner sep=1.2pt,label=0:{}] {};
\node (15) at (-1.05,0.05) [draw,circle,inner sep=1.2pt,label=0:{}] {};
\node (16) at (-1.35,0.35) [draw,circle,inner sep=1.2pt,label=0:{}] {};
\node (17) at (-1.75,0.75) [draw,circle,inner sep=1.2pt,label=0:{}] {};
% cadenas w_{37}^k
\node (18) at (1.65,-1.29) [draw,circle,inner sep=1.2pt,label=0:{}] {};
\node (19) at (2,-1.03) [draw,circle,inner sep=1.2pt,label=0:{}] {};
% cadenas w_{1013}^k
\node (20) at (6.25,-1.22) [draw,circle,inner sep=1.2pt,label=0:{}] {};
\node (21) at (6.55,-1) [draw,circle,inner sep=1.2pt,label=0:{}] {};
\node (22) at (6.85,-0.77) [draw,circle,inner sep=1.2pt,label=0:{}] {};
\node (23) at (7.15,-0.54) [draw,circle,inner sep=1.2pt,label=0:{}] {};
% cadenas w_{810}^k
\node (24) at (5.35,-0.15) [draw,circle,inner sep=1.2pt,label=0:{}] {};
\node (25) at (5.65,0.15) [draw,circle,inner sep=1.2pt,label=0:{}] {};
% cadenas w_{911}
\node (26) at (3.75,-1.5) [draw,circle,inner sep=1.2pt,label=0:{}] {};
\node (27) at (4.1,-1.5) [draw,circle,inner sep=1.2pt,label=0:{}] {};
\path
(1)  edge [<-, black] node[above=0] {} (2)
(1)  edge [<-, black] node[above=0] {} (3)
(2)  edge [<-, black] node[above=0] {} (4)
(2)  edge [<-, black] node[above=0] {} (5)
(3)  edge [<-, black] node[above=0] {} (6)
(3)  edge [<-, black] node[above=0] {} (7)
(8)  edge [<-, black] node[above=0] {} (9)
(8)  edge [<-, black] node[above=0] {} (10)
(9)  edge [<-, black] node[above=0] {} (11)
(10)  edge [<-, black] node[above=0] {} (12)
(10)  edge [<-, black] node[above=0] {} (13);

% --- Puntos medios de las aristas ---
\coordinate (mid_v2_v1) at ($ (1)!0.65!(2) $);  % arista v2-v1
\coordinate (mid_v3_v1) at ($ (1)!0.35!(3) $);  % arista v3-v1
\coordinate (mid_v7_v3) at ($ (3)!0.75!(7) $);  % arista v7-v3
\coordinate (mid_v6_v3) at ($ (3)!0.35!(6) $);  % arista v6-v3
\coordinate (mid_v4_v2) at ($ (2)!0.35!(4) $);  % arista v4-v2
\coordinate (mid_v5_v2) at ($ (2)!0.35!(5) $);  % arista v5-v2
\coordinate (mid_v8_v9) at ($ (8)!0.35!(9) $);  % arista v8-v9
\coordinate (mid_v8_v10) at ($ (8)!0.65!(10) $);  % arista v8-v10
\coordinate (mid_v9_v11) at ($ (9)!0.65!(11) $);  % arista v11-v10
\coordinate (mid_v10_v12) at ($ (10)!0.35!(12) $);  % arista v11-v10
\coordinate (mid_v10_v13) at ($ (10)!0.65!(13) $);  % arista v12-v10

% --- Flechas curvas ---
\draw[->,thick,bend left=35,red] (1) to (8);             % v1 → v8
\draw[->,thick,bend left=55,red] (2) to (17);    % v2 → arista v2-v1
\draw[->,thick,bend right=55, red] (3) to (mid_v3_v1);     % v3 → arista v3-v1
\draw[->,thick,bend left=55,red] (4) to (mid_v4_v2);     % v4 → arista v4-v2
\draw[->,thick,bend right=55,red] (7) to (19);    % v7 → arista v7-v3
\draw[->,thick,bend left=55,red] (6) to (mid_v6_v3);    % v6 → arista v6-v3
\draw[->,thick,bend right=55,red] (5) to (mid_v5_v2);    % v6 → arista v6-v3
\draw[->,thick,bend left=55,red] (9) to (mid_v8_v9); 
\draw[->,thick,bend right=55,red] (10) to (25);
\draw[->,thick,bend right=55,red] (11) to (27);
\draw[->,thick,bend left=55,red] (12) to (mid_v10_v12);
\draw[->,thick,bend right=55,red] (13) to (23);
\draw[->,thick,bend right=65,red] (14) to (15);
\draw[->,thick,bend right=65,red] (16) to (mid_v2_v1);
\draw[->,thick,bend left=65,red] (18) to (mid_v7_v3);
\draw[->,thick,bend left=65,red] (20) to (21);
\draw[->,thick,bend left=65,red] (22) to (mid_v10_v13);
\draw[->,thick,bend left=65,red] (24) to (mid_v8_v10);
\draw[->,thick,bend left=65,red] (26) to (mid_v9_v11);

\end{tikzpicture}
\end{center}
\end{example}

%----------------------------------------------------------------------------------
\subsection{The family $\F_2$}

\begin{theorem}
Let $(\kappa,G)$ be a chain digraph of the family $\F_2$. 
Then, the corresponding Lie algebra $\g(\kappa,G)$ is symplectic.
Moreover, the 2-form
\[ \sigma_G= \sigma_E, \]
where $\sigma_E$ is as in \eqref{eqn:sigmaE}, is a symplectic form on $\g(\kappa,G)$.
\end{theorem}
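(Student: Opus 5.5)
Closedness of $\sigma_G=\sigma_E$ is immediate: by Theorem~\ref{thm:even} each $\sigma_{i,j}$ with $\kappa_{i,j}$ even is closed, and a sum of closed forms is closed. So the work is nondegeneracy. I will show that $\sigma_E$ is of the form \eqref{eqn:sym-form}: every element of the standard basis of $\g(\kappa,G)$ appears exactly once among the wedge pairs, and all coefficients are $\pm1$. Remark~\ref{rmk:sym1} then gives $\sigma_E^n\neq 0$.

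The argument is a counting of the basis elements by edge. By Remark~\ref{rmk:sigmaij}, since $\kappa$ is even, $\sigma_{i,j}$ contains each basis element attached to the oriented edge $(v_i,v_j)$ exactly once, except the head $v_j$. Concretely, it contains $v_i=w^1_{i,j}$ and $w^l_{i,j}$ for $2\le l\le\kappa_{i,j}$, paired as $w^l_{i,j}$ with $w^{\kappa_{i,j}+1-l}_{i,j}$. The elements $w^l_{i,j}$ with $l\ge2$ belong to a single edge, so they occur exactly once in $\sigma_E$. It remains to check the vertices: each $v_i\in V$ must be the tail of exactly one oriented edge. This is where the centripetal orientation is used. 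For a vertex of an attached tree that is not on the cycle, the unique outgoing edge is the first edge of its directed path toward its root on the cycle. For a vertex on the cycle, the unique outgoing edge is its successor edge along the chosen cyclic orientation; edges of attached trees point into it, never out of it. Hence the map $E\to V$, $(v_i,v_j)\mapsto v_i$, is a bijection. This matches $|E|=|V|$ for a connected unicyclic graph.

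So the pairs appearing in $\sigma_E$ partition the whole standard basis of Proposition~\ref{prop:bracket}. In particular $\dim\g(\kappa,G)=|V|+\sum_{E}(\kappa_{i,j}-1)$ is even, since each $\kappa_{i,j}-1$ is odd and $|V|=|E|$.

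The only delicate step is this tail bijection. I will state it precisely as: in the centripetal orientation every vertex has out-degree exactly one. Out-degree at least one holds because every vertex has a directed path into the cycle, and on the cycle vertices follow their successor. Out-degree at most one holds because tree edges are oriented toward the root and the cycle orientation is coherent. Once this is established, nondegeneracy follows directly from Remark~\ref{rmk:sym1}.
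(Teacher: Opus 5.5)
Your proposal is correct and follows the paper's argument. Closedness comes from Theorem~\ref{thm:even}, and nondegeneracy comes from Remark~\ref{rmk:sigmaij} combined with Remark~\ref{rmk:sym1}. The paper leaves one step implicit for $\F_2$ (it states it only in the $\F_3$ proof): in the centripetal orientation every vertex is the tail of exactly one edge, so each $v_i$ appears exactly once in $\sigma_E$; you spell this out, and your justification of it is sound.
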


\begin{proof}
The form $\sigma_E$ is clearly closed and since $\kappa$ is even,
Remark \ref{rmk:sigmaij} implies that $\sigma_G$ is nondegenerate (see \S \ref{subsec:sym}).
\end{proof}

\begin{example}
For the following chain digraph $G$ of $\F_2$,

\medskip

\begin{center}
\begin{tikzpicture}
\tikzset{every loop/.style={looseness=30}}
\node (1) at (1,0) [draw,circle,inner sep=1.2pt,fill=black!100,label=0:{}] {};
\node (2) at (0.5,{sqrt(3)/2}) [draw,circle,inner sep=1.2pt,fill=black!100,label=90:{}] {};
\node (3) at (-0.5,{sqrt(3)/2}) [draw,circle,inner sep=1.2pt,fill=black!100,label=90:{}] {};
\node (4) at (-1,0) [draw,circle,inner sep=1.2pt,fill=black!100,label=180:{}] {};
\node (5) at (-0.5,{-sqrt(3)/2}) [draw,circle,inner sep=1.2pt,fill=black!100,label=270:{}] {};
\node (6) at (0.5,{-sqrt(3)/2}) [draw,circle,inner sep=1.2pt,fill=black!100,label=270:{}] {};
\node (7) at (1.5,{-sqrt(3)/2}) [draw,circle,inner sep=1.2pt,fill=black!100,label=270:{}] {};
\node (8) at (2.303,-0.269) [draw,circle,inner sep=1.2pt,fill=black!100,label=270:{}] {};
\node (9) at (2.174,-1.605) [draw,circle,inner sep=1.2pt,fill=black!100,label=270:{}] {};
\node (10) at (-1.325,1.431) [draw,circle,inner sep=1.2pt,fill=black!100,label=270:{}] {};
\node (11) at (-0.5,2) [draw,circle,inner sep=1.2pt,fill=black!100,label=270:{}] {};
% w_{ij}^k
\node (12) at (0,-1.17) [draw,circle,inner sep=1.2pt,label=0:{}] {};
\node (13) at (0,-1.47) [draw,circle,inner sep=1.2pt,label=0:{}] {};
\node (14) at (0,-1.77) [draw,circle,inner sep=1.2pt,label=0:{}] {};
\node (15) at (0,-2.07) [draw,circle,inner sep=1.2pt,label=0:{}] {};
% w_{ij}^k
\node (16) at (1,0.577) [draw,circle,inner sep=1.2pt,label=0:{}] {};
\node (17) at (1.3,0.751) [draw,circle,inner sep=1.2pt,label=0:{}] {};
% w_{ij}^k
\node (18) at (-1,0.58) [draw,circle,inner sep=1.2pt,label=0:{}] {};
\node (19) at (-1.3,0.75) [draw,circle,inner sep=1.2pt,label=0:{}] {};
\node (20) at (-1.6,0.92) [draw,circle,inner sep=1.2pt,label=0:{}] {};
\node (21) at (-1.9,1.1) [draw,circle,inner sep=1.2pt,label=0:{}] {};

\path
(1) edge[->] (2)
(2) edge[->] (3)
(3) edge[->] (4)
(4) edge[->] (5)
(6) edge[->] (1)
(5) edge[->] (6)
(6) edge[<-] (7)
(7) edge[<-] (8)
(7) edge[<-] (9)
(3) edge[<-] (10)
(10) edge[<-] (11);

\end{tikzpicture}
\end{center}
the symplectic form $\sigma_{G}$ can be depicted as 
\begin{center}
\begin{tikzpicture}
\tikzset{every loop/.style={looseness=30}}
\node (1) at (1,0) [draw,circle,inner sep=1.2pt,fill=black!100,label=0:{}] {};
\node (2) at (0.5,{sqrt(3)/2}) [draw,circle,inner sep=1.2pt,fill=black!100,label=90:{}] {};
\node (3) at (-0.5,{sqrt(3)/2}) [draw,circle,inner sep=1.2pt,fill=black!100,label=90:{}] {};
\node (4) at (-1,0) [draw,circle,inner sep=1.2pt,fill=black!100,label=180:{}] {};
\node (5) at (-0.5,{-sqrt(3)/2}) [draw,circle,inner sep=1.2pt,fill=black!100,label=270:{}] {};
\node (6) at (0.5,{-sqrt(3)/2}) [draw,circle,inner sep=1.2pt,fill=black!100,label=270:{}] {};
\node (7) at (1.5,{-sqrt(3)/2}) [draw,circle,inner sep=1.2pt,fill=black!100,label=270:{}] {};
\node (8) at (2.303,-0.269) [draw,circle,inner sep=1.2pt,fill=black!100,label=270:{}] {};
\node (9) at (2.174,-1.605) [draw,circle,inner sep=1.2pt,fill=black!100,label=270:{}] {};
\node (10) at (-1.325,1.431) [draw,circle,inner sep=1.2pt,fill=black!100,label=270:{}] {};
\node (11) at (-0.5,2) [draw,circle,inner sep=1.2pt,fill=black!100,label=270:{}] {};
% w_{ij}^k
\node (12) at (0,-1.17) [draw,circle,inner sep=1.2pt,label=0:{}] {};
\node (13) at (0,-1.47) [draw,circle,inner sep=1.2pt,label=0:{}] {};
\node (14) at (0,-1.77) [draw,circle,inner sep=1.2pt,label=0:{}] {};
\node (15) at (0,-2.07) [draw,circle,inner sep=1.2pt,label=0:{}] {};
% w_{ij}^k
\node (16) at (1,0.577) [draw,circle,inner sep=1.2pt,label=0:{}] {};
\node (17) at (1.3,0.751) [draw,circle,inner sep=1.2pt,label=0:{}] {};
% w_{ij}^k
\node (18) at (-1,0.58) [draw,circle,inner sep=1.2pt,label=0:{}] {};
\node (19) at (-1.3,0.75) [draw,circle,inner sep=1.2pt,label=0:{}] {};
\node (20) at (-1.6,0.92) [draw,circle,inner sep=1.2pt,label=0:{}] {};
\node (21) at (-1.9,1.1) [draw,circle,inner sep=1.2pt,label=0:{}] {};
%Puntos medios
\coordinate (mid_v1_v2) at ($ (1)!0.35!(2) $);
\coordinate (mid_v2_v3) at ($ (2)!0.65!(3) $);
\coordinate (mid_v3_v4) at ($ (3)!0.65!(4) $);
\coordinate (mid_v4_v5) at ($ (4)!0.65!(5) $);
\coordinate (mid_v5_v6) at ($ (5)!0.35!(6) $);
\coordinate (mid_v6_v1) at ($ (6)!0.65!(1) $);
\coordinate (mid_v6_v7) at ($ (6)!0.35!(7) $);
\coordinate (mid_v7_v8) at ($ (7)!0.45!(8) $);
\coordinate (mid_v7_v9) at ($ (7)!0.45!(9) $);
\coordinate (mid_v3_v10) at ($ (3)!0.35!(10) $);
\coordinate (mid_v10_v11) at ($ (10)!0.35!(11) $);

\path
(1) edge[->] (2)
(2) edge[->] (3)
(3) edge[->] (4)
(4) edge[->] (5)
(6) edge[->] (1)
(5) edge[->] (6)
(6) edge[<-] (7)
(7) edge[<-] (8)
(7) edge[<-] (9)
(3) edge[<-] (10)
(10) edge[<-] (11);

% --- Flechas curvas ---
\draw[->,thick,bend right=55,red] (1) to (17);
\draw[->,thick,bend right=55,red] (16) to (mid_v1_v2);
\draw[->,thick,bend right=55,red] (2) to (mid_v2_v3);
\draw[->,thick,bend right=5,red] (3) to (21);
\draw[->,thick,bend right=55,red] (4) to (mid_v4_v5);
\draw[->,thick,bend right=55,red] (5) to (15);
\draw[->,thick,bend right=55,red] (6) to (mid_v6_v1);
\draw[->,thick,bend left=55,red] (7) to (mid_v6_v7);
\draw[->,thick,bend right=55,red] (8) to (mid_v7_v8);
\draw[->,thick,bend left=55,red] (9) to (mid_v7_v9);
\draw[->,thick,bend left=55,red] (10) to (mid_v3_v10);
\draw[->,thick,bend right=55,red] (11) to (mid_v10_v11);
\draw[->,thick,bend right=75,red] (12) to (13);
\draw[->,thick,bend left=75,red] (14) to (mid_v5_v6);
\draw[->,thick,bend left=75,red] (18) to (19);
\draw[->,thick,bend right=75,red] (20) to (mid_v3_v4);

\end{tikzpicture}
\end{center}
\end{example}

%--------------------------------------------------------------------------------
\subsection{The family $\F_3$}

\begin{theorem}
Let $(\kappa,G)$ be a chain digraph of the family $\F_3$. 
Then, the corresponding Lie algebra $\g(\kappa,G)$ is symplectic.
Moreover, the 2-form
\[ \sigma_G= \sigma_E, \]
where $\sigma_E$ is as in \eqref{eqn:sigmaE}, is a symplectic form on $\g(\kappa,G)$.
\end{theorem}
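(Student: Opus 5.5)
The plan is to follow the same two-step pattern used for $\F_1$ and $\F_2$: first closedness, which is immediate, and then nondegeneracy, obtained by checking that every standard basis element occurs in exactly one monomial of $\sigma_E$, so that Remark~\ref{rmk:sym1} applies.

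For closedness, each $\sigma_{i,j}$ is closed. When $\kappa_{i,j}$ is even this is Theorem~\ref{thm:even}, and when $\kappa_{i,j}$ is odd it is Theorem~\ref{thm:odd}. Hence $\sigma_E=\sum_{(v_i,v_j)\in E}\sigma_{i,j}$ is closed, being a sum of closed forms.

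For nondegeneracy, we first count the elements. Let $G$ be a rooted tree with root $r$, directed from the leaves to the root, and let $(v_a,r)$ be the unique edge with $\kappa_{a,r}$ odd. Since $G$ is a tree directed toward the root, every non-root vertex $v_i$ is the tail of exactly one edge, and $r$ is the tail of none. The standard basis is the disjoint union of $\{r\}$ with the sets $\{v_i\}\cup\{w_{i,j}^l:2\le l\le\kappa_{i,j}\}$, one for each edge $(v_i,v_j)\in E$; here $v_i=w_{i,j}^1$ is the tail.

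We then go through the edges. By Remark~\ref{rmk:sigmaij}, for every edge with $\kappa_{i,j}$ even, the form $\sigma_{i,j}$ is a sum of terms $\pm{w_{i,j}^l}^*\wedge{w_{i,j}^{\kappa_{i,j}+1-l}}^*$. Each $w_{i,j}^l$ with $1\le l\le\kappa_{i,j}$ appears exactly once there, and the head $v_j$ does not appear. For the odd edge $(v_a,r)$, the form $\sigma_{a,r}$ pairs $w_{a,r}^l$ for $1\le l\le\kappa_{a,r}-1$ among themselves. Its extra term $r^*\wedge{w_{a,r}^{\kappa_{a,r}}}^*$ then covers both the top element $w_{a,r}^{\kappa_{a,r}}$ and the root $r$.

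Every head $v_j$ of an even edge is a non-root vertex, or it is $r$. In either case it is covered once: as the tail of its own outgoing edge, or via the odd edge. This is the key point of the argument. The only basis vector that no even edge covers is $r$, because every other vertex is a tail. The odd edge supplies exactly that missing partner. Moreover, $r$ appears in no other $\sigma_{i,j}$, since heads never appear in the even forms.

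Consequently, $\sigma_E$ has the shape \eqref{eqn:sym-form}. It is a sum of $n=\tfrac12\dim\g(\kappa,G)$ terms with nonzero coefficients $\pm1$, and the index pairs partition the basis. Remark~\ref{rmk:sym1} then gives $\sigma_E^n\ne0$.

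The main care needed is the bookkeeping in the nondegeneracy step. One must check that no basis element is used twice; in particular, $r$ must be paired only once even though the root may be the head of several edges. This follows from the fact that the heads of even edges are absent from their forms.
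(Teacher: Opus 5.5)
Your proposal is correct and takes essentially the same approach as the paper. Closedness comes from Theorems~\ref{thm:even} and~\ref{thm:odd}, and nondegeneracy from Remark~\ref{rmk:sigmaij}. Every non-root vertex is the tail of exactly one edge, and the root is covered only by the term $r^*\wedge {w_{a,r}^{\kappa_{a,r}}}^*$ of the odd edge, so each standard basis element appears exactly once and Remark~\ref{rmk:sym1} applies. You simply spell out in full the bookkeeping that the paper's proof states in one line.
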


\begin{proof}
The form $\sigma_E$ is clearly closed. 
Also, every vertex, except for the root, is the tail of exactly one edge. Since the root is the head of an odd edge, it follows from Remark \ref{rmk:sigmaij} that all the elements of the standard basis of \(\g(\kappa,G)\) appear exactly once in $\sigma_G$. 
Consequently, $\sigma_G$ is nondegenerate (see \S \ref{subsec:sym}).
\end{proof}

\begin{example}
For the following chain digraph $G$ of $\F_3$,

\medskip

\begin{center}
\begin{tikzpicture}
\tikzset{every loop/.style={looseness=30}}

% Nodos principales
\node (1) at (0,0) [draw,circle,inner sep=1pt,fill=black!100,label=90:{}] {};
\node (2) at (-1,-1) [draw,circle,inner sep=1pt,fill=black!100] {};
\node (3) at (1,-1) [draw,circle,inner sep=1pt,fill=black!100] {};
\node (4) at (-1.75,-2) [draw,circle,inner sep=1pt,fill=black!100] {};
\node (5) at (-0.35,-2)  [draw,circle,inner sep=1pt,fill=black!100] {};
\node (6) at (0.25,-2)  [draw,circle,inner sep=1pt,fill=black!100] {};
\node (7) at (1.75,-2)  [draw,circle,inner sep=1pt,fill=black!100] {};

% cadena diagonal entre 1 y 2
\node (8) at (-0.7,-0.3) [draw,circle,inner sep=1.2pt,label=0:{}] {};
\node (9) at (-1,0) [draw,circle,inner sep=1.2pt,label=0:{}] {};
\node (10) at (-1.3,0.3) [draw,circle,inner sep=1.2pt,label=0:{}] {};
%\node (11) at (-1.6,0.6) [draw,circle,inner sep=1.2pt,label=0:{}] {};

% cadena diagonal entre 3 y 7 (un poco más arriba)
\node (12) at (1.6,-1.33) [draw,circle,inner sep=1.2pt,label=0:{}] {};
\node (13) at (1.9,-1.11) [draw,circle,inner sep=1.2pt,label=0:{}] {};
\node (14) at (2.2,-0.88) [draw,circle,inner sep=1.2pt] {};
\node (15) at (2.5,-0.66) [draw,circle,inner sep=1.2pt] {};
\node (16) at (2.8,-0.43) [draw,circle,inner sep=1.2pt] {};
\node (17) at (3.1,-0.21) [draw,circle,inner sep=1.2pt] {};

% cadena diagonal entre 2 y 4 (un poco más arriba)
\node (18) at (-1.6,-1.33) [draw,circle,inner sep=1.2pt] {};
\node (19) at (-1.9,-1.11) [draw,circle,inner sep=1.2pt] {};

\path
(1) edge [<-] (2)
(1) edge [<-] (3)
(2) edge [<-] (4)
(2) edge [<-] (5)
(3) edge [<-] (6)
(3) edge [<-] (7);

\end{tikzpicture}
\end{center}
the symplectic form $\sigma_G$ can be depicted as 
\begin{center}
\begin{tikzpicture}
\tikzset{every loop/.style={looseness=30}}

% Nodos principales
\node (1) at (0,0) [draw,circle,inner sep=1pt,fill=black!100,label=90:{}] {};
\node (2) at (-1,-1) [draw,circle,inner sep=1pt,fill=black!100] {};
\node (3) at (1,-1) [draw,circle,inner sep=1pt,fill=black!100] {};
\node (4) at (-1.75,-2) [draw,circle,inner sep=1pt,fill=black!100] {};
\node (5) at (-0.35,-2)  [draw,circle,inner sep=1pt,fill=black!100] {};
\node (6) at (0.25,-2)  [draw,circle,inner sep=1pt,fill=black!100] {};
\node (7) at (1.75,-2)  [draw,circle,inner sep=1pt,fill=black!100] {};

% cadena diagonal entre 1 y 2
\node (8) at (-0.7,-0.3) [draw,circle,inner sep=1.2pt,label=0:{}] {};
\node (9) at (-1,0) [draw,circle,inner sep=1.2pt,label=0:{}] {};
\node (10) at (-1.3,0.3) [draw,circle,inner sep=1.2pt,label=0:{}] {};
%\node (11) at (-1.6,0.6) [draw,circle,inner sep=1.2pt,label=0:{}] {};

% cadena diagonal entre 3 y 7 (un poco más arriba)
\node (12) at (1.6,-1.33) [draw,circle,inner sep=1.2pt,label=0:{}] {};
\node (13) at (1.9,-1.11) [draw,circle,inner sep=1.2pt,label=0:{}] {};
\node (14) at (2.2,-0.88) [draw,circle,inner sep=1.2pt] {};
\node (15) at (2.5,-0.66) [draw,circle,inner sep=1.2pt] {};
\node (16) at (2.8,-0.43) [draw,circle,inner sep=1.2pt] {};
\node (17) at (3.1,-0.21) [draw,circle,inner sep=1.2pt] {};

% cadena diagonal entre 2 y 4 (un poco más arriba)
\node (18) at (-1.6,-1.33) [draw,circle,inner sep=1.2pt] {};
\node (19) at (-1.9,-1.11) [draw,circle,inner sep=1.2pt] {};

%Puntos medios
\coordinate (mid_v1_v2) at ($ (1)!0.35!(2) $);
\coordinate (mid_v1_v3) at ($ (1)!0.35!(3) $);
\coordinate (mid_v2_v4) at ($ (2)!0.35!(4) $);
\coordinate (mid_v2_v5) at ($ (2)!0.35!(5) $);
\coordinate (mid_v3_v6) at ($ (3)!0.35!(6) $);
\coordinate (mid_v3_v7) at ($ (3)!0.65!(7) $);

\path
(1) edge [<-] (2)
(1) edge [<-] (3)
(2) edge [<-] (4)
(2) edge [<-] (5)
(3) edge [<-] (6)
(3) edge [<-] (7);

% --- Flechas curvas ---
\draw[->,thick,bend left=55,red] (8) to (mid_v1_v2);
\draw[->,thick,bend right=55,red] (3) to (mid_v1_v3);
\draw[->,thick,bend left=55,red] (18) to (mid_v2_v4);
\draw[->,thick,bend right=55,red] (5) to (mid_v2_v5);
\draw[->,thick,bend left=55,red] (6) to (mid_v3_v6);
\draw[->,thick,bend left=55,red] (16) to (mid_v3_v7);
\draw[->,thick,bend right=55,red] (1) to (10);
\draw[->,thick,bend left=55,red] (2) to (9);
\draw[->,thick,bend left=55,red] (4) to (19);
\draw[->,thick,bend right=55,red] (7) to (17);
\draw[->,thick,bend right=55,red] (12) to (15);
\draw[->,thick,bend left=55,red] (14) to (13);

\end{tikzpicture}
\end{center}

\end{example}

%------------------------------------
\subsection{The almost abelian case}

An $n$-dimensional almost abelian Lie algebra $\g$, over $\K$, is of the form $\g=T\ltimes\K^{n-1}$,
where $T$ can be assumed to be in Jordan form.

Following \cite{ABDGH}, by replacing $k$ by $q$,
\[ T=J_{n_1}^{p_1} \oplus \dots \oplus J_{n_q}^{p_q} \oplus J_1^t. \]
Here, $J_{n_i}$ is the Jordan block
\[ \begin{pmatrix} 0 & 1 & & & \\ 
                     & 0 & 1 & & \\
                     & & 0 & \ddots & \\
                     & & & \ddots & 1 \\
                     & & & & 0
   \end{pmatrix}.
\]
Note that the dimension of $\g$ is $\dim\g=n=p_1 n_1+\dots+p_q n_q+t$.

It is not difficult to see that $\g=\g(\kappa,G)$ where the digraph $G$ is the one below, and $\kappa_i=n_i+2$.
\begin{center}
\begin{tikzpicture}

% Nodos principales
\node (1) at (0,0) [draw,circle,inner sep=1pt,fill=black!100,label=90:{}] {};

\coordinate (A) at (-3,-0.5);
\node (2) at (A) [draw,circle,inner sep=1pt,fill=black!100,label=90:{}] {};
\coordinate (B) at ([rotate around={45:(0,0)}] A);
\node (3) at (B) [draw,circle,inner sep=1pt,fill=black!100,label=90:{}] {};

\coordinate (C) at (3,-0.5);
\node (4) at (C) [draw,circle,inner sep=1pt,fill=black!100,label=90:{}] {};
\coordinate (D) at ([rotate around={-45:(0,0)}] C);
\node (5) at (D) [draw,circle,inner sep=1pt,fill=black!100,label=90:{}] {};

\coordinate (M1) at ($(0,0)!0.5!(A)$);
\coordinate (M2) at ($(0,0)!0.5!(B)$);
\coordinate (M3) at ($(0,0)!0.5!(C)$);
\coordinate (M4) at ($(0,0)!0.5!(D)$);

\path
(1) edge [-] (2)
(2) edge [->] (M1)
(1) edge [-] (3)
(3) edge [->] (M2)
(1) edge [-] (4)
(4) edge [->] (M3)
(1) edge [-] (5)
(5) edge [->] (M4);

\coordinate (K1) at ($(M1)+0.1*(-0.5,3)$);
\coordinate (K2) at ($(M1)+0.3*(-0.5,3)$);
\path (K1) edge [dash pattern=on 0.5pt off 3pt, line cap=round] (K2);
\node (10) at (K1) [draw,circle,inner sep=1pt,fill=black!0,label=90:{}] {};
\node (11) at (K2) [draw,circle,inner sep=1pt,fill=black!0,label=90:{}] {};

\node (12) at (-2,0) [label=90:{\tiny{$\kappa_1$}}]{};

\coordinate (K3) at ([rotate around={45:(0,0)}] K1);
\coordinate (K4) at ([rotate around={45:(0,0)}] K2);
\path (K3) edge [dash pattern=on 0.5pt off 3pt, line cap=round] (K4);
\node (13) at (K3) [draw,circle,inner sep=1pt,fill=black!0,label=90:{}] {};
\node (14) at (K4) [draw,circle,inner sep=1pt,fill=black!0,label=90:{}] {};

\node (15) at (-1.6,-1.5) [label=90:{\tiny{$\kappa_1$}}]{};

\coordinate (K5) at ($(M3)+0.1*(0.5,3)$);
\coordinate (K6) at ($(M3)+0.3*(0.5,3)$);
\path (K5) edge [dash pattern=on 0.5pt off 3pt, line cap=round] (K6);
\node (20) at (K5) [draw,circle,inner sep=1pt,fill=black!0,label=90:{}] {};
\node (21) at (K6) [draw,circle,inner sep=1pt,fill=black!0,label=90:{}] {};

\node (22) at (2,0) [label=90:{\tiny{$\kappa_q$}}]{};

\coordinate (K7) at ([rotate around={-45:(0,0)}] K5);
\coordinate (K8) at ([rotate around={-45:(0,0)}] K6);
\path (K7) edge [dash pattern=on 0.5pt off 3pt, line cap=round] (K8);
\node (23) at (K7) [draw,circle,inner sep=1pt,fill=black!0,label=90:{}] {};
\node (24) at (K8) [draw,circle,inner sep=1pt,fill=black!0,label=90:{}] {};

\node (15) at (1.6,-1.5) [label=90:{\tiny{$\kappa_q$}}]{};

\draw[dashed] (A) to[bend right=30] (B);
\draw[dashed] (B) to[bend right=20] (D);
\draw[dashed] (D) to[bend right=30] (C);

\node (31) at (-3,-2.2) [label=90:{\tiny{$p_1$}}]{};
\node (32) at (3,-2.2) [label=90:{\tiny{$p_q$}}]{};

\node (41) at (5,-1.5) [draw,circle,inner sep=1pt,fill=black!100,label=90:{}] {};
\node (42) at (5.5,-1.5) [draw,circle,inner sep=1pt,fill=black!100,label=90:{}] {};
\node (43) at (7.5,-1.5) [draw,circle,inner sep=1pt,fill=black!100,label=90:{}] {};
\draw[dashed] (6,-1.5) to (7,-1.5);
\draw[decorate,decoration={brace,mirror,amplitude=5pt}]
([yshift=-6pt]41.south) -- ([yshift=-6pt]43.south)
node[midway,yshift=-14pt] {$\text{\tiny t}$};

\end{tikzpicture}
\end{center}

If $t$ is even and $\kappa$ is almost even, or $t$ is odd and $\kappa$ is even, 
then it follows from Theorems \ref{thm:even} and \ref{thm:odd}
that $\g=\g(\kappa,G)$ is symplectic.
Moreover, an explicit symplectic form is easy to write down.

In \cite{ABDGH} the authors give necessary and sufficient conditions for $\g=\g(\kappa,G)$ to be symplectic,
that are weaker than the ones described by us for this family.

%=============================================================================
\section{Symplectic Lie algebras of prescribed nilpotency type}

Given a finite-dimensional $k$-step nilpotent Lie algebra $\g$, its \emph{nilpotency type} is the $k+1$-tuple 
$(a_1, a_2, \dots, a_{k+1})$ of dimensions of the successive quotients of the lower central series, i.e.,
\[
a_i = \dim(\mathfrak{g}^{i} / \mathfrak{g}^{i+1}).
\]
Observe that \(a_{k+1}=0\).

\begin{lemma}\label{lm:typ}
Given a chain digraph \((\kappa,G)\), the type $(a_1,\dots,a_{k+1})$ of the Lie algebra $\g(\kappa,G)$ satisfies
\[
|\kappa^{-1}(\{l\})|=a_{l}-a_{l+1},
\]
 for all \(2\leq l\leq k\).
\end{lemma}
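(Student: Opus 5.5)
The plan is to read off the dimensions $a_l$ directly from Proposition~\ref{prop:bracket}. That proposition gives a Carnot grading
\[
\g(\kappa,G)=\langle B_1\rangle \oplus \langle B_2^{\kappa}\rangle \oplus\dots\oplus\langle B_k^{\kappa}\rangle,
\]
so $\g(\kappa,G)^l=\langle B_l^\kappa\rangle\oplus\dots\oplus\langle B_k^\kappa\rangle$. Hence $a_l=\dim(\g^l/\g^{l+1})=|B_l^\kappa|$ for all $1\le l\le k$, and $a_{k+1}=0$. The proof then reduces to counting the sets $B_l^\kappa$.

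Next, I will use the explicit description $B_l^\kappa=\{w_{i,j}^l:(v_i,v_j)\in E,\ l\le\kappa_{i,j}\}$ for $2\le l\le k$. For distinct edges the elements $w_{i,j}^l$ are distinct basis elements, since by Remark~\ref{rmk:1} they have different multidegrees. Therefore
\[
a_l=|B_l^\kappa|=\bigl|\{e\in E:\ \kappa(e)\ge l\}\bigr| = |\kappa^{-1}(\{l,l+1,\dots,k\})|.
\]
The same formula holds for $l=k+1$: the set $\kappa^{-1}(\{k+1,\dots\})$ is empty because $k$ is the maximum of $\kappa$, which agrees with $a_{k+1}=0$.

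Finally, for $2\le l\le k$ I subtract:
\[
a_l-a_{l+1}=|\kappa^{-1}(\{l,\dots,k\})|-|\kappa^{-1}(\{l+1,\dots,k\})|=|\kappa^{-1}(\{l\})|.
\]
This works because the second set is contained in the first and their difference is exactly $\kappa^{-1}(\{l\})$.

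The only point needing care is the first step: the identification $a_l=|B_l^\kappa|$ depends on the Carnot grading asserted in Proposition~\ref{prop:bracket}, namely that $\g^l$ equals the sum of the graded pieces of degree $\ge l$. If I wanted to verify this independently, I would argue by induction on $l$ using the brackets $[w_{i,j}^{l},v_j]=w_{i,j}^{l+1}$. This shows that every $w_{i,j}^{l}$ with $l\le\kappa_{i,j}$ lies in $\g^{l}$, while the grading gives the reverse inclusion.
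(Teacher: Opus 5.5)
Your proposal is correct and takes essentially the same route as the paper: both read off $a_l=|B_l^\kappa|$ from the Carnot grading in Proposition~\ref{prop:bracket}, and then identify $|B_l^\kappa|-|B_{l+1}^\kappa|$ with the number of edges on which $\kappa$ equals $l$. Your check that distinct edges give distinct elements $w_{i,j}^l$, and your uniform treatment of $l=k$ via $a_{k+1}=0$ and $\kappa\le k$, make explicit what the paper leaves implicit or handles as a separate case.
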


\begin{proof}
It follows from Proposition \ref{prop:bracket} that
\[a_l=|\bigcup_{i=l}^k B_{i}^\kappa|-|\bigcup_{i=l+1}^k B_{i}^\kappa|=|B_l^\kappa|. \] 
Hence, for each $2\le l\le k-1$, 
\begin{align*}
a_l-a_{l+1}&=|B_l^\kappa|-|B_{l+1}^\kappa|\\
&=|\{w_{i,j}^l: (v_i,v_j)\in E,\; l\leq \kappa_{i,j} \}|-|\{w_{i,j}^{l+1}: (v_i,v_j)\in E,\;l+1\leq \kappa_{i,j} \}|\\
&=|\{(v_i,v_j)\in E :\, l\leq \kappa_{i,j} \}|-|\{(v_i,v_j)\in E :\, l+1\leq \kappa_{i,j} \}|\\
&=|\{(v_i,v_j)\in E :\, l= \kappa_{i,j} \}|\\
&=|\kappa^{-1}(\{l\})|.
\end{align*}
For $l=k$, $$a_k-a_{k+1}=a_k=|B_k^\kappa|=|\kappa^{-1}(\{k\})|.$$
\end{proof}

\begin{theorem}\label{thm:type}
Given a $k+1$-tuple of natural numbers $(a_1, a_2, \dots, a_k,0)$, let 
\[ S=\sum_{\substack{i\ge 3 \\ i \text{ odd}}}a_i - \sum_{\substack{i\ge 4 \\ i \text{ even}}}a_i. \] 
If $a_1+\dots + a_k$ is even, $a_i \geq a_{i+1}$, for all $i=1,\dots,k-1$, and $a_1-a_2\geq S$,
then there exists a symplectic $k$-step nilpotent Lie algebra of type \((a_1,\dots,a_k,0)\).
\end{theorem}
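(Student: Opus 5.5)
The plan is to realize the prescribed type by an explicit chain digraph $(\kappa,G)$ in the family $\F$, that is, a disjoint union of members of $\F_1$, $\F_2$, $\F_3$. The symplectic conclusion then follows from the theorems for $\F_1$, $\F_2$, $\F_3$ together with the direct sum remark. Proposition~\ref{prop:bracket} guarantees that $\g(\kappa,G)$ is $k$-step nilpotent with Carnot grading. By Lemma~\ref{lm:typ} (and $a_1=|V|$), it suffices to build a digraph with $|V|=a_1$ and exactly
\[
n_l:=a_l-a_{l+1}\ge 0
\]
edges with $\kappa=l$, for $2\le l\le k$. The monotonicity hypothesis is exactly what makes every $n_l$ nonnegative. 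The total number of edges is then $\sum_l n_l=a_2$.

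First I count the odd edges. The number of edges with odd $\kappa$ is
\[
\sum_{l\ge3\ \text{odd}} (a_l-a_{l+1}) = a_3-a_4+a_5-\cdots = S .
\]
Each odd edge must be the unique odd edge of its own $\F_3$ tree, ending at the root. So I place each odd edge as a single directed edge $(u,r)$, giving a two-vertex $\F_3$ tree. This uses $2S$ vertices and leaves $E_e:=a_2-S$ even edges and $a_1-2S$ vertices. Set $r:=a_1-a_2-S$, which is $\ge 0$ by hypothesis.

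Next I check parity. Since $\dim\g(\kappa,G)=a_1+\sum_{\text{edges}}(\kappa_{i,j}-1)$, even edges contribute odd amounts and odd edges even amounts. Hence
\[
a_1+\cdots+a_k\equiv a_1+E_e\equiv r \pmod 2,
\]
so $r$ is even. Now distribute the rest as follows.
\begin{enumerate}
\item If $E_e\ge 3$, take a cycle on $E_e$ new vertices, oriented cyclically, with the even values of $\kappa$ assigned arbitrarily. This is in $\F_2$.
\item Then add $r$ isolated vertices, grouped into $r/2$ pairs. Each pair is a member of $\F_1$ (two trivial trees, with form $v_1^*\wedge v_2^*$).
\end{enumerate}
The vertex count is $2S+E_e+r=a_1$, as required.

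The main obstacle is the small cases $E_e\in\{1,2\}$, where no cycle is available. If $S\ge1$, I would instead attach the even edges as a directed path hanging into the root of one $\F_3$ tree. This keeps that tree in $\F_3$ and consumes $E_e$ extra vertices, so the count is unchanged. If $S=0$ and $r\ge2$, I would use a path of $E_e$ even edges as one tree of an $\F_1$ pair, with one isolated vertex as the partner tree; this uses $E_e+2$ vertices, i.e.\ $r-1$ of the excess, and the remaining $r-2$ vertices (an even number) are paired as before. The remaining case is $S=0$, $r=0$, $E_e\in\{1,2\}$, which forces $a_1=a_2\le 2$. Types such as $(2,2,\dots)$ or $(1,1,\dots)$ are not realizable by any Lie algebra, since $\dim\g^2/\g^3\le\binom{a_1}{2}$. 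Handling this degenerate case (presumably implicitly excluded by the hypotheses, or noting that the statement needs $a_2\le\binom{a_1}2$) is the point I expect to require care. Finally, $a_k>0$ ensures that some edge has $\kappa=k$, so the algebra is genuinely $k$-step.
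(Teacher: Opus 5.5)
Your argument is correct and is essentially the paper's proof made concrete. The paper puts each odd edge alone in a box $A_i$ and $T=a_1-a_2-S$ single vertices in boxes $B_i$, paired into members of $\mathcal{F}_1$ since $T$ is even. It then distributes the even edges among all boxes, subject to ``box $C$ is empty or has at least $3$ edges''. Your cycle, path-into-an-$\mathcal{F}_3$-root and path-in-an-$\mathcal{F}_1$-pair cases are specific choices of that distribution, with the same vertex count $|V|=|E|+S+T$ and the same parity observation. There is one slip: in the case $S=0$, $r\ge 2$, the path plus its partner uses $2$ of the excess vertices, not $r-1$; your remaining count $r-2$ is right.

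Your worry about the leftover case is justified, and the paper does not handle it either. If $a_1=a_2$, the hypotheses force $S=T=0$. For $a_1=a_2\in\{1,2\}$, e.g. $(1,1,0)$, $(2,2,0)$ or $(2,2,1,1,0)$, all hypotheses hold, yet no Lie algebra has that type. In those cases the paper's Step (3) cannot be carried out, because the only box is $C$ and it would receive one or two edges. So the theorem needs an extra hypothesis such as $a_2\le\binom{a_1}{2}$; under the other hypotheses this excludes exactly these cases.
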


\begin{proof}
Let $(a_1, a_2, \dots, a_k,0)$ be a sequence satisfying the hypothesis of the theorem, \begin{comment} we give ``steps" to construct a family  \(\mathcal{F}_{(a_1,\dots,a_k)}\) of chain digraphs such that the type of \(\g(\kappa,G)\) is \((a_1,\dots,a_k)\), for all \((\kappa,G)\in\F_{(a_1,\dots,a_k,0)}\). Also, \(\mathcal{F}_{(a_1,\dots,a_k)}\) is a subfamily of \(\mathcal{F}\), described in Section \ref{sec:fam-sym}. So, if \((\kappa,G)\in\mathcal{F}_{(a_1,\dots,a_k)}\), the Lie algebra \(\g(\kappa,G)\) is symplectic.  \end{comment}
the family of chain digraphs \(\mathcal{F}_{(a_1,\dots,a_k,0)}\) is formed by the chain digraphs that can be constructed following the steps:

\begin{enumerate}
\item For each \(2\leq i\leq k\), construct \(a_{i}-a_{i+1}\) edges with \(i-2\) over points. Observe that the number of edges that has an odd number of over points is \(S\). 
\item Construct some boxes as follows:
\begin{enumerate}[(a)]
\item In \(S\) disjoint boxes, \(A_1,\dots, A_{S}\), put one and only one edge with an odd number of over points. 
\item In \(T=a_1-a_2-S\) disjoint boxes, \(B_{1},\dots, B_{T}\), put one single vertex on each one. 
\item Adjoint one empty box, \(C\). 
\end{enumerate}
\item We have already located in boxes only the odd edges. Distribute the remaining edges (the edges with an even number of over points) in the boxes (there are \(S+T+1\) boxes). The only restriction to distribute it is: if the box \(C\) is non-empty, then it has at least 3 edges.  
\item Glue the vertices that are in the same box to form a graph and direct it as follows:
\begin{enumerate}[(a)]
\item In each box \(A_i\) form a tree. Then, choose one of the vertices of the odd edge to be the root, and direct it from the leaves to the root.   
\item In each box \(B_i\) form a tree. Then, choose a root and direct it from the leaves to the root.
\item In the box \(C\) form a graph that is the union of unicyclic graphs (the existence of at least one such graph is guaranteed by the presence of at least three edges). Then, direct each connected component centripetally.
\end{enumerate}
\item Join the digraphs of all boxes and define its chain function at an edge equal to the number of its over points plus 2.  
\end{enumerate}

Now observe that since \(a_1+\dots+a_k\) is even, then 
\[
a_1+\sum_{\substack{i\ge 3 \\ i \text{ odd}}} a_{i} \quad \text{ and } \quad a_2+\sum_{\substack{i\ge 3 \\ i \text{ odd}}} a_{i}
\] 
have the same parity. Thus, $T$ is even.

Each chain digraph in the family \(\mathcal{F}_{(a_1,\dots,a_k,0)}\) is a union of chain digraphs in the family \(\mathcal{F}_3\) (in the boxes \(A_i\)), in the family \(\mathcal{F}_1\) (in the boxes \(B_i\), since \(T\) is even) and in the family \(\mathcal{F}_2\) (in the box \(C\)) (see Section \ref{sec:fam-sym}). Therefore, \(\mathcal{F}_{(a_1,\dots,a_k,0)}\) is a subfamily of  \(\mathcal{F}\) and if \((\kappa,G)\in  \mathcal{F}_{(a_1,\dots,a_k,0)}\), then \(\g(\kappa,G)\) is symplectic.

Let \((\kappa,G)\) be a chain digraph that can be formed following the previous steps. From Steps (1) and (5), the maximum value of \(\kappa\) is \(k\), so \(\g(\kappa,G)\) is \(k\)-step nilpotent. It remains to prove that its type is \((a_1,\dots,a_k,0)\). 

Let \((n_1,\dots,n_k,0)\) be the type of \(\g(\kappa,G)\), from Lemma \ref{lm:typ} and Step (1), we have 
\begin{align*}
n_i-n_{i+1}&=|\kappa^{-1}(\{i\})|\\
&=\{\text{edges with \(i-2\) over points}\}\\
&=a_i-a_{i+1},
\end{align*}
for all \(2\leq i\leq k\). 
In particular, it satisfies \(n_k=a_k\). So, inductively, we have \(n_i=a_i\) for all \(2\leq i\leq k\).

It only remains to prove that \(n_1=a_1\). Observe that \(n_1\) is the number of vertices in the graph \(G\). Also, 
\[G=A\cup B\cup C,\]
where \(A=(V_A,E_A)\) is the disjoint union of the graphs in the boxes \(A_i\), \(B=(V_B,E_B)\) is the union of the graphs in the boxes \(B_i\), and \(C\) is the graph in the box \(C=(V_C,E_C)\).  
Since \(A\) and \(B\) are forests and \(C\) is a union of unicyclics graphs, we have that

\begin{align*}
|V_A|&=|E_A|+|\{\text{connected components of } A\}|;\\
|V_B|&=|E_B|+|\{\text{connected components of } B\}|;\\
|V_C|&=|E_C|.
\end{align*}
From step (2), we have
\begin{align*}
|V_A|&=|E_A|+S;\\
|V_B|&=|E_B|+T;\\
|V_C|&=|E_C|.
\end{align*}
So, 
\begin{align*}
|V_G|&=|V_A|+|V_B|+|V_C|\\
&=|E_A|+S + |E_B|+ T +|E_C|\\
&=|E_A|+S+|E_B|+a_1-a_2-S+|E_C|\\
&=|E_A|+|E_B|+|E_C|+a_1-a_2.
\end{align*}
We already have proved that \(a_2=n_2\), and we also know that \(n_2=|E_G|=|E_A|+|E_B|+|E_C|\). Thus, 
\[|V_G|=n_2+a_1-a_2=a_1.\]

Therefore, for every chain digraph \((\kappa,G)\in\F_{(a_1,\dots,a_k,0)}\), the Lie algebra $\g(\kappa,G)$ is symplectic and its type is \((a_1,\dots,a_k,0)\).
\end{proof}

\begin{remark}
It is not difficult to prove that if \((\kappa,G)\in\mathcal{F}\) (the family of chain digraphs described in Section \ref{sec:fam-sym}), then the type of \(\g(\kappa,G)\) is a sequence that satisfies the hypothesis of Theorem \ref{thm:type}.
\end{remark}

\begin{example}
 Given the sequence of numbers $(6,5,3,3,2,1,0)$, 
 which satisfies the conditions of the previous theorem, we construct a particular chain digraph \((\kappa,G)\)  such that the Lie algebra \(\g(\kappa,G)\) is symplectic and its type is $(6,5,3,3,2,1,0)$.
 
 First,
\[S=a_3+a_5-(a_4+a_6)=1 \quad \text{ and } \quad T=a_1-a_2-S=0.\]

So, there is 1 box of type \(A_i\), 0 boxes of type \(B_i\) and the box \(C\).

Compute the differences $a_i - a_{i+1}$ for all $2\leq i\leq 6$ to construct edges with over points:
\begin{align*}
i=2:& & a_2-a_3 &= 2, &\text{2 edges with 0 over points.}\\
i=3:& & a_3 - a_4 &= 0, &\text{0 edges with 1 over point.} \\
i=4:& &a_4 - a_5 &= 1, &\text{1 edge with 2 over points.}\\
i=5:& &a_5 - a_6 &= 1, &\text{1 edge with 3 over points.}\\
i=6:& &a_6 - a_7 &= 1, &\text{1 edge with 4 over points.}\\
\end{align*}

In the box \(A_1\) put the edge with 3 over points and distribute the others even edges in the boxes \(A_1\) and \(C\), respecting the restriction: \(|C|=0\) or \(|C|\ge 3\). For example, we can put the tree edges in the box \(C\) and one in the box \(A_1\). Then, glue the vertices in the box \(A_1\) to form a tree and glue the vertices of the box \(C\) to form a graph that union of unicyclic graphs. Finally, direct it taking in account that \((\kappa,G)\) must be in the family \(\mathcal{F}\). The following is an example of chain digraph that can be constructed following these steps:

\begin{center}
\begin{tikzpicture}
\tikzset{every loop/.style={looseness=30}}

% Nodos principales
\node (1) at (0,0) [draw,circle,inner sep=1pt,fill=black!100,label=90:{}] {};
\node (2) at (1,1) [draw,circle,inner sep=1pt,fill=black!100] {};
\node (3) at (2,0) [draw,circle,inner sep=1pt,fill=black!100,label=90:{}] {};
\node (4) at (4.5,0) [draw,circle,inner sep=1pt,fill=black!100] {};
\node (5) at (5.5,1) [draw,circle,inner sep=1pt,fill=black!100] {};
\node (6) at (6.5,0) [draw,circle,inner sep=1pt,fill=black!100] {};
% cadenas w_{12}^k 
\node (7) at (0.3,0.7) [draw,circle,inner sep=1.2pt,label=0:{}] {};
\node (8) at (0,1) [draw,circle,inner sep=1.2pt,label=0:{}] {};
\node (9) at (-0.3,1.3) [draw,circle,inner sep=1.2pt,label=0:{}] {};
% cadenas w_{23}^k 
\node (10) at (1.7,0.7) [draw,circle,inner sep=1.2pt,label=0:{}] {};
\node (11) at (2,1) [draw,circle,inner sep=1.2pt,label=0:{}] {};
% cadenas w_{56}^k 
\node (12) at (6.2,0.7) [draw,circle,inner sep=1.2pt,label=0:{}] {};
\node (13) at (6.5,1) [draw,circle,inner sep=1.2pt,label=0:{}] {};
\node (14) at (6.8,1.3) [draw,circle,inner sep=1.2pt,label=0:{}] {};
\node (13) at (7.1,1.6) [draw,circle,inner sep=1.2pt,label=0:{}] {};

\path
(1) edge [->] (2)
(2) edge [<-] (3)
(4) edge [<-] (5)
(5) edge [<-] (6)
(4) edge [->] (6);

\end{tikzpicture}
\end{center}

A symplectic form of \(\g(\kappa,G)\) is given by 
\begin{center}
\begin{tikzpicture}
\tikzset{every loop/.style={looseness=30}}

% Nodos principales
\node (1) at (0,0) [draw,circle,inner sep=1pt,fill=black!100,label=90:{}] {};
\node (2) at (1,1) [draw,circle,inner sep=1pt,fill=black!100] {};
\node (3) at (2,0) [draw,circle,inner sep=1pt,fill=black!100] {};
\node (4) at (4.5,0) [draw,circle,inner sep=1pt,fill=black!100] {};
\node (5) at (5.5,1) [draw,circle,inner sep=1pt,fill=black!100] {};
\node (6) at (6.5,0) [draw,circle,inner sep=1pt,fill=black!100] {};
% cadenas w_{12}^k 
\node (7) at (0.3,0.7) [draw,circle,inner sep=1.2pt,label=0:{}] {};
\node (8) at (0,1) [draw,circle,inner sep=1.2pt,label=0:{}] {};
\node (9) at (-0.3,1.3) [draw,circle,inner sep=1.2pt,label=0:{}] {};
% cadenas w_{23}^k 
\node (10) at (1.7,0.7) [draw,circle,inner sep=1.2pt,label=0:{}] {};
\node (11) at (2,1) [draw,circle,inner sep=1.2pt,label=0:{}] {};
% cadenas w_{56}^k 
\node (12) at (6.2,0.7) [draw,circle,inner sep=1.2pt,label=0:{}] {};
\node (13) at (6.5,1) [draw,circle,inner sep=1.2pt,label=0:{}] {};
\node (14) at (6.8,1.3) [draw,circle,inner sep=1.2pt,label=0:{}] {};
\node (15) at (7.1,1.6) [draw,circle,inner sep=1.2pt,label=0:{}] {};
%Puntos medios
\coordinate (mid_v1_v2) at ($ (1)!0.35!(2) $);
\coordinate (mid_v2_v3) at ($ (2)!0.65!(3) $);
\coordinate (mid_v4_v5) at ($ (4)!0.35!(5) $);
\coordinate (mid_v5_v6) at ($ (5)!0.65!(6) $);
\coordinate (mid_v4_v6) at ($ (4)!0.65!(6) $);

\path
(1) edge [->] (2)
(2) edge [<-] (3)
(4) edge [<-] (5)
(5) edge [<-] (6)
(4) edge [->] (6);
% --- Flechas curvas ---
\draw[->,thick,bend left=55,red] (1) to (8);
\draw[->,thick,bend right=55,red] (7) to (mid_v1_v2);
\draw[->,thick,bend right=55,red] (2) to (9);
\draw[->,thick,bend right=55,red] (3) to (11);
\draw[->,thick,bend left=55,red] (10) to (mid_v2_v3);
\draw[->,thick,bend right=55,red] (5) to (mid_v4_v5);
\draw[->,thick,bend right=45,red] (4) to (mid_v4_v6);
\draw[->,thick,bend right=55,red] (6) to (15);
\draw[->,thick,bend left=55,red] (14) to (mid_v5_v6);
\draw[->,thick,bend left=55,red] (12) to (13);

\end{tikzpicture}
\end{center}

\end{example}

\begin{example}
 Given the sequence of numbers $(15,11,7,7,4,3,1,0)$, 
 which satisfies the conditions of the previous theorem, we construct a chain digraph \((\kappa,G)\) such that the Lie algebra \(\g(\kappa,G)\) is symplectic and its type is $(15,11,7,7,4,3,1,0)$.
 
 First,
\[S=a_3+a_5+a_7-(a_4+a_6)=2 \quad \text{ and } \quad T=a_1-a_2-S=2.\]

So, there are 2 boxes of type \(A_i\), 2 boxes of type \(B_i\) and the box \(C\).

Compute the differences $a_i - a_{i+1}$ for all $2\leq i\leq 7$ to construct edges with over points:
\begin{align*}
i=2:& & a_2-a_3 &= 4, &\text{4 edges with 0 over points.}\\
i=3:& & a_3 - a_4 &= 0, &\text{0 edges with 1 over point.} \\
i=4:& &a_4 - a_5 &= 3, &\text{3 edge with 2 over points.}\\
i=5:& &a_5 - a_6 &= 1, &\text{1 edge with 3 over points.}\\
i=6:& &a_6 - a_7 &= 2, &\text{2 edge with 4 over points.}\\
i=7:& &a_7 - a_8 &= 1, &\text{1 edge with 5 over points.}
\end{align*}

In the boxes \(A_1\) and \(A_2\) put the edges with three over points and 5 over points respectively. In the boxes \(B_1\) and \(B_2\) put one vertex on each one. Distribute the others even edges in the boxes \(A_1, A_2, B_1, B_2\) and \(C\), respecting the restriction: \(|C|=0\) or \(|C|\ge 3\). Then, glue the vertices in the boxes \(A_i\) and \(B_i\) to form trees and glue the vertices in the box \(C\) to form an union of unicyclic graphs. Finally, direct it taking in account that \((\kappa,G)\) must be in the family \(\mathcal{F}\). The following is an example of chain digraph that can be constructed following these steps:

\begin{center}
\begin{tikzpicture}
\tikzset{every loop/.style={looseness=30}}
% Nodos principales
\node (1) at (0,0) [draw,circle,inner sep=1pt,fill=black!100,label=90:{}] {};
\node (2) at (1,1) [draw,circle,inner sep=1pt,fill=black!100] {};
\node (3) at (2,0) [draw,circle,inner sep=1pt,fill=black!100] {};
%2-árbol
\node (4) at (3.5,0.5) [draw,circle,inner sep=1pt,fill=black!100] {};
\node (5) at (4.5,1) [draw,circle,inner sep=1pt,fill=black!100] {};
\node (6) at (4.5,0) [draw,circle,inner sep=1pt,fill=black!100] {};
% 3árbol
\node (7) at (6,0) [draw,circle,inner sep=1pt,fill=black!100,label=90:{}] {};
\node (8) at (7,1) [draw,circle,inner sep=1pt,fill=black!100] {};
\node (9) at (8,0) [draw,circle,inner sep=1pt,fill=black!100] {};

\node (35) at (6.5,2) [draw,circle,inner sep=1pt,fill=black!100] {};
\node (36) at (7.5,2) [draw,circle,inner sep=1pt,fill=black!100] {};
%C3
\node (10) at (9.5,0) [draw,circle,inner sep=1pt,fill=black!100,label=90:{}] {};
\node (11) at (10.5,1) [draw,circle,inner sep=1pt,fill=black!100] {};
\node (12) at (11.5,0) [draw,circle,inner sep=1pt,fill=black!100] {};
% cadenas árbol1 
\node (13) at (0.3,0.7) [draw,circle,inner sep=1.2pt,label=0:{}] {};
\node (14) at (0,1) [draw,circle,inner sep=1.2pt,label=0:{}] {};
\node (15) at (-0.3,1.3) [draw,circle,inner sep=1.2pt,label=0:{}] {};
\node (16) at (-0.6,1.6) [draw,circle,inner sep=1.2pt,label=0:{}] {};
\node (17) at (-0.9,1.9) [draw,circle,inner sep=1.2pt,label=0:{}] {};

\node (18) at (1.7,0.7) [draw,circle,inner sep=1.2pt,label=0:{}] {};
\node (19) at (2,1) [draw,circle,inner sep=1.2pt,label=0:{}] {};
%cadenas árbol2
\node (20) at (3.85,1.05) [draw,circle,inner sep=1.2pt,label=0:{}] {};
\node (21) at (3.65,1.45) [draw,circle,inner sep=1.2pt,label=0:{}] {};
\node (22) at (3.45,1.85) [draw,circle,inner sep=1.2pt,label=0:{}] {};

\node (23) at (3.88,0.01) [draw,circle,inner sep=1.2pt,label=0:{}] {};
\node (24) at (3.68,-0.39) [draw,circle,inner sep=1.2pt,label=0:{}] {};
%cadenas árbol3
\node (25) at (6.3,0.7) [draw,circle,inner sep=1.2pt,label=0:{}] {};
\node (26) at (6,1) [draw,circle,inner sep=1.2pt,label=0:{}] {};
\node (27) at (5.7,1.3) [draw,circle,inner sep=1.2pt,label=0:{}] {};
\node (28) at (5.4,1.6) [draw,circle,inner sep=1.2pt,label=0:{}] {};

\node (29) at (7.7,0.7) [draw,circle,inner sep=1.2pt,label=0:{}] {};
\node (30) at (8,1) [draw,circle,inner sep=1.2pt,label=0:{}] {};
%cadenas C3
\node (31) at (11.2,0.7) [draw,circle,inner sep=1.2pt,label=0:{}] {};
\node (32) at (11.5,1) [draw,circle,inner sep=1.2pt,label=0:{}] {};
\node (33) at (11.8,1.3) [draw,circle,inner sep=1.2pt,label=0:{}] {};
\node (34) at (12.1,1.6) [draw,circle,inner sep=1.2pt,label=0:{}] {};

\path
(1) edge [->] (2)
(2) edge [<-] (3)
(4) edge [->] (5)
(4) edge [<-] (6)
(7) edge [->] (8)
(8) edge [<-] (9)
(10) edge[->] (11)
(11) edge[->] (12)
(12) edge[->] (10)
(35) edge[->] (36);
\end{tikzpicture}
\end{center}

A symplectic form of \(\g(\kappa,G)\) is given by 

\begin{center}
\begin{tikzpicture}
\tikzset{every loop/.style={looseness=30}}

% Nodos principales
\node (1) at (0,0) [draw,circle,inner sep=1pt,fill=black!100,label=90:{}] {};
\node (2) at (1,1) [draw,circle,inner sep=1pt,fill=black!100] {};
\node (3) at (2,0) [draw,circle,inner sep=1pt,fill=black!100] {};
%2-árbol
\node (4) at (3.5,0.5) [draw,circle,inner sep=1pt,fill=black!100] {};
\node (5) at (4.5,1) [draw,circle,inner sep=1pt,fill=black!100] {};
\node (6) at (4.5,0) [draw,circle,inner sep=1pt,fill=black!100] {};
% 3árbol
\node (7) at (6,0) [draw,circle,inner sep=1pt,fill=black!100,label=90:{}] {};
\node (8) at (7,1) [draw,circle,inner sep=1pt,fill=black!100] {};
\node (9) at (8,0) [draw,circle,inner sep=1pt,fill=black!100] {};

\node (35) at (6.5,2) [draw,circle,inner sep=1pt,fill=black!100] {};
\node (36) at (7.5,2) [draw,circle,inner sep=1pt,fill=black!100] {};
%C3
\node (10) at (9.5,0) [draw,circle,inner sep=1pt,fill=black!100,label=90:{}] {};
\node (11) at (10.5,1) [draw,circle,inner sep=1pt,fill=black!100] {};
\node (12) at (11.5,0) [draw,circle,inner sep=1pt,fill=black!100] {};
% cadenas árbol1 
\node (13) at (0.3,0.7) [draw,circle,inner sep=1.2pt,label=0:{}] {};
\node (14) at (0,1) [draw,circle,inner sep=1.2pt,label=0:{}] {};
\node (15) at (-0.3,1.3) [draw,circle,inner sep=1.2pt,label=0:{}] {};
\node (16) at (-0.6,1.6) [draw,circle,inner sep=1.2pt,label=0:{}] {};
\node (17) at (-0.9,1.9) [draw,circle,inner sep=1.2pt,label=0:{}] {};

\node (18) at (1.7,0.7) [draw,circle,inner sep=1.2pt,label=0:{}] {};
\node (19) at (2,1) [draw,circle,inner sep=1.2pt,label=0:{}] {};
%cadenas árbol2
\node (20) at (3.85,1.05) [draw,circle,inner sep=1.2pt,label=0:{}] {};
\node (21) at (3.65,1.45) [draw,circle,inner sep=1.2pt,label=0:{}] {};
\node (22) at (3.45,1.85) [draw,circle,inner sep=1.2pt,label=0:{}] {};

\node (23) at (3.88,0.01) [draw,circle,inner sep=1.2pt,label=0:{}] {};
\node (24) at (3.68,-0.39) [draw,circle,inner sep=1.2pt,label=0:{}] {};
%cadenas árbol3
\node (25) at (6.3,0.7) [draw,circle,inner sep=1.2pt,label=0:{}] {};
\node (26) at (6,1) [draw,circle,inner sep=1.2pt,label=0:{}] {};
\node (27) at (5.7,1.3) [draw,circle,inner sep=1.2pt,label=0:{}] {};
\node (28) at (5.4,1.6) [draw,circle,inner sep=1.2pt,label=0:{}] {};

\node (29) at (7.7,0.7) [draw,circle,inner sep=1.2pt,label=0:{}] {};
\node (30) at (8,1) [draw,circle,inner sep=1.2pt,label=0:{}] {};
%cadenas C3
\node (31) at (11.2,0.7) [draw,circle,inner sep=1.2pt,label=0:{}] {};
\node (32) at (11.5,1) [draw,circle,inner sep=1.2pt,label=0:{}] {};
\node (33) at (11.8,1.3) [draw,circle,inner sep=1.2pt,label=0:{}] {};
\node (34) at (12.1,1.6) [draw,circle,inner sep=1.2pt,label=0:{}] {};
%´puntos medios
\coordinate (mid_v1_v2) at ($ (1)!0.35!(2) $);
\coordinate (mid_v2_v3) at ($ (2)!0.75!(3) $);
\coordinate (mid_v4_v5) at ($ (4)!0.35!(5) $);
\coordinate (mid_v4_v6) at ($ (4)!0.75!(6) $);
\coordinate (mid_v7_v8) at ($ (7)!0.35!(8) $);
\coordinate (mid_v8_v9) at ($ (8)!0.75!(9) $);
\coordinate (mid_v10_v11) at ($ (10)!0.65!(11) $);
\coordinate (mid_v11_v12) at ($ (11)!0.35!(12) $);
\coordinate (mid_v10_v12) at ($ (10)!0.35!(12) $);
\coordinate (mid_v35_v36) at ($ (35)!0.65!(36) $);

\path
(1) edge [->] (2)
(2) edge [<-] (3)
(4) edge [->] (5)
(4) edge [<-] (6)
(7) edge [->] (8)
(8) edge [<-] (9)
(10) edge[->] (11)
(11) edge[->] (12)
(12) edge[->] (10)
(35) edge[->] (36);

% --- Flechas curvas ---
\draw[->,thick,bend right=55,red] (2) to (17);
\draw[->,thick,bend left=55,red] (1) to (16);
\draw[->,thick,bend right=55,red] (15) to (mid_v1_v2);
\draw[->,thick,bend left=55,red] (13) to (14);
\draw[->,thick,bend right=55,red] (3) to (19);
\draw[->,thick,bend left=55,red] (18) to (mid_v2_v3);
\draw[->,thick,bend right=55,red] (5) to (22);
\draw[->,thick,bend left=55,red] (4) to (21);
\draw[->,thick,bend right=55,red] (20) to (mid_v4_v5);
\draw[->,thick,bend left=55,red] (6) to (24);
\draw[->,thick,bend right=55,red] (23) to (mid_v4_v6);
\draw[->,thick,bend left=55,red] (7) to (28);
\draw[->,thick,bend right=55,red] (27) to (mid_v7_v8);
\draw[->,thick,bend right=55,red] (25) to (26);
\draw[->,thick,bend right=25,red] (8) to (36);
\draw[->,thick,bend left=55,red] (35) to (mid_v35_v36);
\draw[->,thick,bend right=55,red] (9) to (30);
\draw[->,thick,bend left=55,red] (29) to (mid_v8_v9);
\draw[->,thick,bend left=55,red] (10) to (mid_v10_v11);
\draw[->,thick,bend left=55,red] (11) to (34);
\draw[->,thick,bend right=55,red] (33) to (mid_v11_v12);
\draw[->,thick,bend left=55,red] (31) to (32);
\draw[->,thick,bend left=55,red] (12) to (mid_v10_v12);

\end{tikzpicture}
\end{center}

\end{example}

\section*{Acknowledgements}

We thank Laura Barberis for useful discussion we had on the almost abelian case. 

%===============================================

 \newpage

\end{document}